\newcommand{\R}{{\mathbb R}}
\newcommand{\cH}{\mathcal H}
\newcommand{\argmin}{{\rm argmin}\kern 0.12em}
\newcommand{\dotp}[2]{\langle #1,\,#2 \rangle}
\def\a{\alpha}
\def\b{\beta}
\def\d{\delta}
\def\g{\gamma}
\def\m{\mu}
\def\l{\lambda}
\def\<{\langle}
\def\>{\rangle}
\def\r{\rho}
\def\p{\partial}
\def\n{\nabla}
\def\prox{\hbox{prox}}
\theoremstyle{thmstyleone}%
\newtheorem{theorem}{Theorem}
\newtheorem{proposition}[theorem]{Proposition}%
\newtheorem{corollary}[theorem]{Corollary}%
\newtheorem{lemma}[theorem]{Lemma}%
\theoremstyle{thmstyletwo}%
\newtheorem{example}{Example}%
\newtheorem{remark}{Remark}%
\theoremstyle{thmstylethree}%
\DeclarePairedDelimiter\norm{\lVert}{\rVert}%
\begin{document}

\title[temporal scaling  and Tikhonov approximation of a first-order dynamical system]{Strong  convergence towards the minimum norm solution via temporal scaling and Tikhonov approximation of a first-order dynamical system}
\author[1]{\fnm{Akram Chahid} \sur{Bagy}}\email{akram.bagy@gmail.com}
\author[1]{\fnm{Zaki} \sur{Chbani}}\email{chbaniz@uca.ac.ma}
\author*[1]{\fnm{Hassan} \sur{Riahi}}\email{h-riahi@uca.ac.ma}
\affil[1]{\orgdiv{Department of Mathematics}, \orgname{Faculty of Sciences, Cadi Ayyad University}, \orgaddress{\city{Marrakesh}, \postcode{40000},  \country{Morocco}}}

\abstract{
Given a proper convex lower semicontinuous function defined on a Hilbert space and whose solution set is supposed  nonempty. For attaining a global minimizer when this convex function is continuously differentiable,
we approach it  by a first-order continuous dynamical system with 
a time rescaling parameter
and a Tikhonov regularization term. We show, along the generated trajectories, fast convergence of values, fast convergence of gradients towards origin and strong convergence towards the minimum norm element of the solution set.
These convergence rates now depend on the time rescaling parameter, and thus improve existing results by choosing this parameter appropriately. 
%
%
The obtained results illustrate, via particular cases on the choice of  the time rescaling parameter, good performances of
the proposed continuous method and the wide range of applications they can address.
Numerical  illustrations for continuous  example is provided to confirm the theoretical results.
}
\keywords{Convex optimization, Temporal scaling  method, Tikhonov approximation.}
%
\pacs[MSC Classification]{37N40, 46N10, 49XX, 90B50, 90C25}
\maketitle

\large

\section{Introduction}\label{sec1}

In a Hilbert setting  $\mathcal{H}$, we denote by $\langle\cdot ,\cdot\rangle$ and  $\Vert \cdot\Vert$ the associate inner product and norm respectively. Let $f:\,\cH\rightarrow \mathbb R$ be a  continuously differentiable convex function.
Consider the following  optimization problem 
\begin{equation}\label{minf}
\tag*{$(\mathcal{P})$} \min \left\lbrace f(x):\, x\in \mathcal{H}\right\rbrace ,
\end{equation}
 whose global solution set $S:=   \argmin_{\mathcal{H}} f $ is assume to be  nonempty.
\\
The main aim of this paper is to approach the Cauchy first order dynamical system 
 \begin{equation}\label{FODE}
 \dot{x}(t) + \nabla f (x(t))= 0.
\end{equation}
via a high-value penalty parameter $\b(t)$ and  a Tikhonov regularizing term $\frac{c}{\b(t)}$:
\begin{equation}\label{systeme continue}
\dot{x}(t)+\b(t) \n f(x(t))+c x(t) =0.
\end{equation}
Then, via  Lyapunov analysis, we obtain convergence results ensuring fast convergence of values, fast convergence of gradients towards zero and strong convergence of the corresponding solution $x(t)$ towards the minimum norm element of $S$.\\
\if{

The goal of this paper is to construct a  suitable  trajectory

sequence $(x_k)$
in order to jointly obtain the fast rate of convergence for  the objective function $f(x_k)$ to the optimal  value $f^\star=\min_{\mathcal{H}}f$, the fast rate norm convergence  to zero of a selected gradient in the subdifferential  $\partial f(x_k)$, and also the strong convergence of the iterates $x_k$ to $x^\star$ the element of minimum norm of $S$.\\
As a guide in our study, we first rely on the asymptotic behaviour  of trajectories

of  a
suitable continuous dynamical system.
Let's first recall some classic results concerning  the continuous steepest descent method
 \begin{equation}\label{FODE}
 \dot{x}(t) + \nabla f (x(t))= 0.
\end{equation}
This gradient  method goes back to Cauchy (1847),  while the proximal algorithm was first introduced by Martinet \cite{martinet} (1970), and then developed   by Rockafellar \cite{Rock} who extended it to solve monotone inclusions.  One can consult \cite{BaCo,PaBo,Pey,Pey_Sor,Polyak2} for a recent account on the proximal methods, that play a central role in nonsmooth optimization as a basic block of many splitting algorithms.
\\
}\fi 
Let us recall that  recent research axes have focused on coupling first-order time-gradient systems with a Tikhonov approximation whose coefficient tends asymptotically to zero.
By solving a general  ill posed problem $b= Bx$ in the sense of Hadamard, Tikhonov proposed the new method  which he called a method of "regularization", see \cite{Tikhonov1,Tikhonov2}. This method, which has been developed  in \cite{Morozov,TikhonovLY} and references therein, consists  first in solving the well-posed problem $b=Bx_\epsilon +\epsilon x_\epsilon$, and then in converging $x_\epsilon$ towards a selected point $\bar x$ (as $\epsilon\rightarrow 0$) which verifies $B\bar x=b$.
\\
The minimization of the function $ \varphi _t(x):=f(x)+ \dfrac{c}{2\beta (t)}\norm{x} ^2 $, where  $c$ is a positive real number and $\beta (t) $ goes to $+\infty$ as $t\rightarrow +\infty$, can be seen as a penalization of the problem of minimizing  the objective function $\frac12\|\cdot\|^2$ under the constraint $x\in \hbox{argmin} f$. This is also a two level hierarchical minimization problem, see \cite{cabo05,attcz10,attcz17,ccr00}. 
\\
Knowing that $ \psi_t:=\beta (t)\varphi _t=\frac{c}{2}\norm{\cdot} ^2+\beta (t)f$ cross towards the  proper convex lower semicontinuous function $\bar\psi:=\frac{c}{2}\norm{\cdot} ^2+\iota_{\hbox{argmin} f}$ as $t\rightarrow+\infty$ if $\iota_{C}$ is the indicator function of the set $C$, i.e., $\iota_{C} (x) = 0$ for $x\in C$, and $+\infty$ outwards, this monotone convergence is proved as a variational convergence  and then as $t\rightarrow+\infty$ (see \cite[Theorems 3.20, 3.66]{att83}) the corresponding weak$\times$strong or strong $\times$weak graph convergence of the associated subdifferential operators: $\nabla \psi_t{\displaystyle\longrightarrow^G} \partial\bar\psi$.  As $t\rightarrow +\infty$, suppose that the unique minimizer $z_t$ of $\psi_t$ weakly converges to some $\bar z$, then $(z_t,0)$ weak$\times$strong converges to $(\bar z,0)$ in the graph of $\bar\psi$; this can be explained as  
\[
0\in  \partial\bar\psi(\bar z)= \partial\left(\dfrac{c}{2}\norm{\cdot} ^2+\iota_{\hbox{argmin} f}\right)(\bar z) = c\bar z +  \partial(\iota_{\hbox{argmin} f})(\bar z).
\]
 The final equality is due to continuity of the convex function $\frac{c}{2}\norm{\cdot} ^2$ at some point in the nonempty set ${\hbox{argmin} f}$, which is the effective domaine of the  convex lower semicontinuous function $\iota_{\hbox{argmin} f}$.
\\
Suppose $f$ is nondifferentiable, then asymptotical behaviour, for $t\rightarrow+\infty$, can be explained as the steepest descent dynamical system 
\begin{equation}\label{systeme continue2}
0\in \dot{x}(t)  +\partial\left(\dfrac{c}{2}\norm{\cdot} ^2+\beta (t)f\right)(x(t))=  \dot{x}(t)  +cx(t) +\beta (t)\partial f(x(t)).
\end{equation}
 An abundant literature has been devoted to the asymptotic hierarchical minimization property which results from the introduction of a vanishing viscosity term (in our context the Tikhonov approximation) in gradient-like dynamics. For first-order gradient systems and subdifferential inclusions, see \cite{Att2,AttCom}. In \cite{AttCom},  Attouch and Cominetti coupled the dynamic steepest descent method and a Tikhonov regularization term
$$
\dot x(t)+\partial f(x(t))+\epsilon (t)x(t)\ni 0.
$$
   The striking point of their analysis is the strong convergence of the trajectory $x(t)$ when the regularization parameter $\epsilon (t)$ tends to zero with a sufficiently slow rate of convergence $\epsilon \not\in L^1(\mathbb R_+, \mathbb R)$. Then the strong limit is the minimum norm element of  $\hbox{agrmin }f$. However, if $\epsilon (t) = 0$ we can only expect a weak convergence of the induced trajectory $x(t)$.
 Attouch and Czarnecki in \cite[Theorem 3.1]{attcz10} studied the asymptotic behaviour, as time variable $t$ goes to $+\infty$, of a general nonautonomous first order dynamical system 
$$
0\in \dot{x}(t)  +\partial \varphi (x(t)) + \beta(t)\partial f(x(t)),
$$
and proved weak convergence of $x(t)$ to some $\bar x$ in $\hbox{argmin}\varphi $ on ${\hbox{argmin} f}$ that satisfies $0\in \partial(\varphi +\iota_{\hbox{argmin} f})(\bar x)$. This can be translated for $\varphi = \frac12\|\cdot\|^2$ 
 to $\bar x$ is the minimal norm solution  of  \ref{minf}. This can be considered as a combination of two techniques: the time scaling of a damped inertial gradient system (see \cite{ACR1, ABCR,ACR2, ACRA}), and the Tikhonov regularization of such systems (see \cite{BCR1, ABCR-JDE} and related references).\\
Our first-order approach in \eqref{systeme continue} derives the following fast convergence results:
\begin{eqnarray}\label{conv-rap:sontsyst}
&&f(x(t)) - \min_{\mathcal H}f = o\left( \dfrac{1}{\b (t)}\right)\hbox{ and }\norm{\nabla f(x(t))}^2 	= o\left( \dfrac{1}{\b(t)}\right).
\end{eqnarray}
%
 
In the context of non-autonomous dissipative dynamic systems, reparameterization in time is a simple and universal
means to accelerate the convergence of trajectories. This is where the coefficient
$\beta(t)$ comes in as a factor of $\nabla f(x(t))$:
\begin{equation}\label{equation2}
\ddot{x}(t) +  \frac{\alpha}{t} \, \dot{x}(t) + \beta (t) \nabla f(x(t))    =0,
\end{equation}

 In  \cite{ACR1,ACR2,bk1},
the authors  proved that under appropriate conditions on $\alpha$ and $\beta(t)$, 
$f(x(t)) -\min f =  \mathcal{O} \left( \frac1{t^2\beta(t)} \right)$, hence an improvement of the convergence rate  for the values
is reached by taking $\beta (t)\rightarrow +\infty$ as $t\rightarrow +\infty$, however strong convergence of $x(t)$ was omitted due to lack of control. Similar results were established in \cite{ACFR} when the inertial system \eqref{equation2} is with a Hessian-driven damping.

In the later papers  \cite{BCR1,BCR2}, we considered a similar system as \eqref{equation2} without Nesterov's acceleration parameter $\frac1t$ and with the Tikhonov regularizing term $\frac{c}{\beta (t)}\|\cdot\|^2$ to the convex function $f$:
\begin{equation}\label{equation}
 \ddot{x}(t) +  \alpha \, \dot{x}(t)+  \beta (t)\left\lbrace   \nabla f(x(t)) + \dfrac{c}{\beta(t)} x(t)\right\rbrace     =0.
\end{equation}
This is exactly a second-order variation in time of our system \eqref{systeme continue}. Thus this system \eqref{equation} is more computationally expensive than \eqref{systeme continue},  although it maintains the same convergence rates \eqref{conv-rap:sontsyst} where conditions on $\b(t)$ are wider than those imposed in \cite{ACR1} and \cite{ACR2}.

As a remarkable method for improving convergence rates by moving from a first order time system to a second order one, we can cite the recent paper by Attouch et all \cite{abn1} where the authors use the Fast convex optimization via closed-loop time scaling and averaging technical. Unfortunately, this method is not valid when the first order time system is already governed by a time rescaling parameter.

\if{
In the fifth and sixth Sections of this paper, the continuous aspect of \eqref{equation2} makes it possible to emerge simpler and less technical proofs and thus to better schematize the proof of the results associated with the algorithms generated by the proposed discretizations.

So, to attain a solution of the problem $(\mathcal{P})$ for nonsmooth convex function $f$, we consider the following implicit  discretization of the set-valued dynamical system 
$
0\in \dot{x}(t)+\b(t) \left(\partial f+\frac{c}{\b(t)}I\right) (x(t)) :
$
\begin{equation}\label{disc syst }
0\in \dot x_k + \dfrac{\b_k}{d} \left(\p f+\dfrac{1-d}{\b_k}\right)(x_{k+1}) = 
x_{k+1}-x_k+\dfrac{\b_k}{d} \p f(x_{k+1}) + \dfrac{1-d}{d}x_{k+1},
\end{equation}
where  $\b_k=d\beta(k)$ and $ d={1-c}\in ]0,1[$.
\\
Recall that the proximal operator can be formulated as follows $\hbox{prox}_{\b f}( x):=\left( I+\b \p f \right)^{-1}(x)$, then iteration \eqref{disc syst } can be reformulated as 
the "Proximal Algorithm with Tikhonov Regularization" :
\begin{equation}\label{proximal-algorithm}
\tag*{{\rm (PATiR)}}
x_{k+1}=\hbox{prox}_{\b_k f}(d x_k).
\end{equation}
Remark that this algorithm  \ref{proximal-algorithm} can be seen as a Forward-Backward iteration for a two-level hierarchical minimization problem:
\begin{center}$
\hbox{prox}_{\b_k f}( d x_k) = \hbox{prox}_{\b_k f}( x_k-c x_k) = \hbox{prox}_{\b_k f}\left( x_k-\b_k\nabla(\frac{c}{2\b_k}\|\cdot\|^2) (x_k)\right).
$\end{center}
Thus, we adhere to  establish similar proposals as fast convergence of values and  gradients towards zero and strong convergence   of the proximal algorithm \ref{proximal-algorithm} to the metric projection $x^*$ of the origine on the solution set $\hbox{agrmin }f$ under  suitable  conditions on the sequence $(\b_k)$ going to infinity.\\
%
Comparing our algorithm \ref{proximal-algorithm} and the one proposed by L\'aszl\'o in  \cite{Las23} as an implicit discretization of the dynamical system $\ddot x(t)+ \frac{\alpha}{t^q} \dot x(t)+\nabla f(x(t))+ \frac{c}{t^p} x(t)=0$ (see \cite{Las23A}), we justify the best convergences generated by \ref{proximal-algorithm}.
}\fi

The organization of the rest of the paper is as follows. In Section 2  we first recall basic facts concerning Tikhonov approximation.  
Then, under an appropriate setting of the parameters and based on Lyapunov' s analysis,  show in the main result (Theorem \ref{Th1})  that the trajectories provide jointly  fast convergence of values, fast convergence of gradients towards zero, and strong convergence to the minimum norm minimizer. 
In Section 3, we apply these results to two particular cases of tthe coefficient
$\beta(t)$. Section 4 is devoted a numerical example.
 Finally, in the last section we discuss the contribution of this paper as well as  the extension of these results to two level hierarchical minimization problems.

\section{Convergence rates for the implemented continuous system}\label{sec2}
Return to the differential equation \eqref{systeme continue} :
\begin{equation*}
\dot{x}(t)+\b(t) \n f(x(t))+c x(t) =0.
\end{equation*}
We suppose the following conditions:
\begin{equation*}\label{H01}
\tag*{$(\mathbf{H}_f)$}\left \{
\begin{array}{lll}
(i)&& f \textit{ is convex and  differentiable on } \cH,\\\\
(ii)&& S := \argmin \, f \neq \emptyset\, , \\\\
(iii)&& \nabla f  \textit{  Lipschitz continuous on  }\mathcal{H},
\end{array}
\right.
\end{equation*}
and, there exists $\mu\in (0,c)$ such that, for $t> t_0$
\begin{equation*}\label{H0}
\tag*{$(\mathbf{H}_\beta )$} \;
\left \{
\begin{array}{lll}
(i)&& \beta (t) \hbox{ is a positive,}\; \mathcal{C}^2\, \hbox{ function with }  \dot{\beta}(t)\neq 0,\\\\
(ii)&&
\dfrac{\dot{\b}(t)}{\b(t)}  \leq c-\m  ,\\
(iii)&& 
{	\underset{t\rightarrow +\infty}{\limsup} \,\dfrac{  1 + \dfrac{\dot{\beta } (t)}{(c-\m)\beta  (t)}}{\mu + \dfrac{\ddot{\beta } (t)}{{\dot\beta} (t)} - \dfrac{\dot{\b}(t)}{\b(t)}} <+\infty  .}
\end{array}
\right.\hspace{6mm}
\end{equation*}
Let us denote by $x^*$  the minimum norm element  of  $S$, and introduce the energy function $E(t)$  that  is defined on $[t_0, +\infty[$ by
\begin{equation}\label{3}
E(t) := 
\b(t)\left(\varphi_{t}(x(t))-\varphi_{t}(y(t))\right) +\dfrac{c}{2}\|x(t)-y(t)\|^{2}
\end{equation}
where $x(t)$ is a solution of \eqref{systeme continue}, $y(t) = \argmin_{\cH}{\varphi}_{t}$    and
$
\varphi_t (x) := f(x) + \dfrac{c}{2\b(t)} \|x\|^2.
$\\
\begin{lemma}\label{lemma 2} 
The following properties on $y(t)$ are satisfied:
\begin{itemize}
\item[i)] For all $t\geq t_0,$ $\dfrac{d}{dt} \left( \varphi_t (y(t)) \right)= \dfrac{-c \dot{\b}(t)}{\b^2(t)} \|y(t)\|^2 $.
\item[ii)] For almost every $t \geq t_0$,
$\| \dot y(t)\| \leq  \dfrac{\dot{\b}(t)}{\b(t)}\|y(t)\|.$
\end{itemize}
\end{lemma}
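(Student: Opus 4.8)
The starting point is that $y(t)=\argmin_{\cH}\varphi_t$ is characterized by the first-order optimality condition
\[
\nabla f(y(t)) + \frac{c}{\b(t)}\, y(t) = 0,
\]
since $\varphi_t$ is convex and differentiable with the strongly convex quadratic term $\frac{c}{2\b(t)}\|\cdot\|^2$; this gives a well-defined $\mathcal{C}^1$ curve $t\mapsto y(t)$ because $\nabla f$ is Lipschitz (so the operator $\nabla f + \frac{c}{\b(t)}I$ is strongly monotone and Lipschitz, and the implicit function theorem applies with $\b\in\mathcal{C}^2$ and $\dot\b\neq 0$). For part (i), I would differentiate $\varphi_t(y(t)) = f(y(t)) + \frac{c}{2\b(t)}\|y(t)\|^2$ with respect to $t$, using the chain rule:
\[
\frac{d}{dt}\varphi_t(y(t)) = \dotp{\nabla f(y(t))}{\dot y(t)} - \frac{c\,\dot\b(t)}{2\b^2(t)}\|y(t)\|^2 + \frac{c}{\b(t)}\dotp{y(t)}{\dot y(t)}.
\]
The first and third terms combine to $\dotp{\nabla f(y(t)) + \frac{c}{\b(t)}y(t)}{\dot y(t)} = 0$ by the optimality condition, leaving exactly $-\frac{c\,\dot\b(t)}{2\b^2(t)}\|y(t)\|^2$. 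Here I should double-check the stated factor: the computation yields $\frac12$ of the claimed coefficient, so either the lemma intends $\frac{-c\dot\b(t)}{2\b^2(t)}\|y(t)\|^2$ or there is a typo; I would follow the honest computation and flag the discrepancy.

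For part (ii), the idea is to differentiate the optimality condition $\b(t)\nabla f(y(t)) + c\,y(t) = 0$ in $t$, which gives
\[
\dot\b(t)\nabla f(y(t)) + \b(t)\nabla^2 f(y(t))\dot y(t) + c\,\dot y(t) = 0,
\]
at least formally (and rigorously for a.e.\ $t$ via a difference-quotient argument if $f$ is merely $\mathcal{C}^1$ with Lipschitz gradient, using monotonicity in place of $\nabla^2 f\succeq 0$). Taking the inner product with $\dot y(t)$ and using $\dotp{\nabla^2 f(y(t))\dot y(t)}{\dot y(t)}\geq 0$ (monotonicity of $\nabla f$) yields
\[
c\|\dot y(t)\|^2 \leq -\dot\b(t)\dotp{\nabla f(y(t))}{\dot y(t)} = \frac{c\,\dot\b(t)}{\b(t)}\dotp{y(t)}{\dot y(t)} \leq \frac{c\,\dot\b(t)}{\b(t)}\|y(t)\|\,\|\dot y(t)\|,
\]
where the middle equality substitutes $\nabla f(y(t)) = -\frac{c}{\b(t)}y(t)$ and the last step is Cauchy--Schwarz (noting $\dot\b(t)>0$ can be arranged, or one inserts absolute values). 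Dividing by $c\|\dot y(t)\|$ gives $\|\dot y(t)\|\leq \frac{\dot\b(t)}{\b(t)}\|y(t)\|$.

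The cleanest way to avoid second derivatives of $f$ altogether — and the route I would actually take for the a.e.\ statement — is to work directly with difference quotients: for $s,t$ close, subtract the two optimality relations and use strong monotonicity of $\nabla f + \frac{c}{\b}I$ together with the resolvent/proximal identity $y(t) = \prox_{(\b(t)/c) f}(0)$; the nonexpansiveness-type estimates for proximal maps with varying parameter then deliver the Lipschitz bound on $y(\cdot)$ with the stated constant, and differentiability a.e.\ follows from Rademacher/absolute continuity. The main obstacle is precisely this regularity bookkeeping under assumption $(\mathbf{H}_f)$ (only $\mathcal{C}^1$, Lipschitz $\nabla f$, no $\mathcal{C}^2$): one must justify that $y(\cdot)$ is locally Lipschitz hence differentiable a.e., and that the formal differentiation of the optimality condition is valid at points of differentiability — everything else is a short chain-rule and Cauchy--Schwarz computation.
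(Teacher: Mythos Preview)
Your argument is correct and follows the standard route for this type of viscosity-curve lemma. The paper itself does not supply a self-contained proof; it simply refers the reader to Lemma~2 of \cite{ABCR-JDE} with the substitution $\varepsilon(t)\leftrightarrow c/\beta(t)$, so there is no detailed approach to compare against --- your chain-rule computation for (i) and the monotonicity/difference-quotient argument for (ii) are exactly what that reference does.

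Your observation about the missing factor $\tfrac12$ in part~(i) is well-founded: the honest chain-rule computation gives $-\dfrac{c\dot\beta(t)}{2\beta^2(t)}\|y(t)\|^2$, and indeed the paper silently uses this corrected formula (with the $\tfrac12$) when it invokes the lemma inside the proof of the main theorem. The lemma statement carries a typo, and you were right to flag it rather than force your computation to match.
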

\begin{proof}
The proof of this lemma is similar to that of \cite[Lemma 2]{ABCR-JDE}, by substituting $ \varepsilon(t)$ with $\frac{c}{\b(t)}$ and $x_{\varepsilon(t)}$ with $y(t)$.
\end{proof}
We remark that \eqref{systeme continue} becomes 
\begin{equation}\label{systeme continue1}
\dot{x}(t)+\b(t) \n \varphi_{t}(x(t)) =0.
\end{equation}
and, under the initial condition $x(t_0)=x_0\in \cH$, it admits a unique solution. 
\begin{theorem}\label{Th1}
Let $f: \mathcal{H} \rightarrow \mathbb{R}$ be a convex function satisfying condition \ref{H01} and $x: [t_0 , +\infty[ \rightarrow \mathcal{H}  $ be a solution of the system \eqref{systeme continue}.\\
If $\beta(t)$ satisfies \ref{H0}, then there exists $t_1 \geq t_0$ such that, for  $t \geq t_1$  
\begin{eqnarray}
\label{eq:8-1}&&f(x(t)) - \min_{\mathcal H}f = \mathcal{O}\left( \dfrac{1}{\b (t)}\right);\\
\label{eq:8-2}&& \|x(t) - y(t)\|^2 			= \mathcal{O}\left( \frac{ 1}{e^{\mu t}} + \dfrac{\dot{\b}(t)}{\b(t)}\right);\\
\label{eq:8-3}&&\norm{\nabla f(x(t))}^2 	= \mathcal{O}\left( \dfrac{1}{\b(t)}\right).
\end{eqnarray}
Suppose moreover that $\displaystyle\lim_{t\rightarrow +\infty}\dfrac{\dot{\b}(t)}{\b(t)}=0$, then $x(t)$ strongly converges to $x^*$, and 
\begin{eqnarray}
\label{eq:8-4}&& f(x(t)) - \min_{\mathcal H}f = o\left( \dfrac{1}{\b (t)}\right)\hbox{ and }\norm{\nabla f(x(t))}^2 	= o\left( \dfrac{1}{\b(t)}\right).
\end{eqnarray}
\end{theorem}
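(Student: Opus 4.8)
The plan is to run a Lyapunov analysis based on the energy $E(t)$ of \eqref{3}, together with the classical facts on the Tikhonov trajectory $y(t)=\argmin_{\cH}\varphi_t$. First I would record two preliminaries. Since $\varphi_t$ is strongly convex with modulus $c/\beta(t)$ and $\nabla\varphi_t(y(t))=0$, one gets $\varphi_t(x(t))-\varphi_t(y(t))\ge\frac{c}{2\beta(t)}\|x(t)-y(t)\|^2\ge0$, hence $E(t)\ge c\|x(t)-y(t)\|^2\ge0$ and also $\varphi_t(x(t))-\varphi_t(y(t))\le E(t)/\beta(t)$. Moreover, comparing $\varphi_t(y(t))\le\varphi_t(x^*)$ with $f(y(t))\ge\min f$ gives the Tikhonov estimate $\|y(t)\|\le\|x^*\|$ for all $t$ and, since $\beta(t)\to+\infty$, the strong convergence $y(t)\to x^*$. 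Finally I would use the rewriting \eqref{systeme continue1}, $\dot x(t)=-\beta(t)\nabla\varphi_t(x(t))$.

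The heart of the proof is the differentiation of $E$. Writing $\psi_t:=\beta(t)\varphi_t=\frac c2\|\cdot\|^2+\beta(t)f$, the equation gives $\frac{d}{dt}\psi_t(x(t))=\dot\beta(t)f(x(t))-\|\dot x(t)\|^2$; Lemma \ref{lemma 2}(i) gives $\frac{d}{dt}\psi_t(y(t))=\dot\beta(t)\varphi_t(y(t))-\frac{c\dot\beta(t)}{\beta(t)}\|y(t)\|^2$; and the quadratic term contributes $c\langle x(t)-y(t),\dot x(t)-\dot y(t)\rangle$, where strong convexity of $\varphi_t$ applied to $\dot x(t)=-\beta(t)\nabla\varphi_t(x(t))$ yields $c\langle x(t)-y(t),\dot x(t)\rangle\le-cE(t)$ and Lemma \ref{lemma 2}(ii) yields $-c\langle x(t)-y(t),\dot y(t)\rangle\le c\frac{\dot\beta(t)}{\beta(t)}\|x(t)-y(t)\|\,\|y(t)\|$. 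Assembling these, replacing $\dot\beta(t)\big(f(x(t))-\varphi_t(y(t))\big)$ by $\frac{\dot\beta(t)}{\beta(t)}\big(E(t)-\frac c2\|x(t)-y(t)\|^2\big)$ via $f\le\varphi_t$, discarding $-\|\dot x(t)\|^2\le0$, absorbing the cross term through $-\frac12 a^2+ab\le\frac12 b^2$ against the surviving $-\frac{c\dot\beta(t)}{2\beta(t)}\|x(t)-y(t)\|^2$, and using $\|y(t)\|\le\|x^*\|$ together with $(\mathbf{H}_\beta)(ii)$ (i.e. $\frac{\dot\beta(t)}{\beta(t)}-c\le-\mu$), one arrives at
\[
\dot E(t)\ \le\ -\mu\,E(t)\ +\ \tfrac{3c\|x^*\|^2}{2}\,\tfrac{\dot\beta(t)}{\beta(t)} .
\]

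Integrating (Grönwall) gives $E(t)\le E(t_1)e^{-\mu(t-t_1)}+\frac{3c\|x^*\|^2}{2}\int_{t_1}^{t}e^{-\mu(t-s)}\frac{\dot\beta(s)}{\beta(s)}\,ds$, and this is where $(\mathbf{H}_\beta)(iii)$ is used: its numerator $1+\frac{\dot\beta}{(c-\mu)\beta}$ is bounded thanks to $(\mathbf{H}_\beta)(ii)$, so the $\limsup$ condition forces $\mu+\frac{\ddot\beta(s)}{\dot\beta(s)}-\frac{\dot\beta(s)}{\beta(s)}=\mu+\frac{d}{ds}\ln\frac{\dot\beta(s)}{\beta(s)}\ge\delta>0$ for $s\ge t_1$; then $\frac{d}{ds}\big(e^{\mu s}\frac{\dot\beta(s)}{\beta(s)}\big)\ge\delta\,e^{\mu s}\frac{\dot\beta(s)}{\beta(s)}$, whence $\int_{t_1}^{t}e^{-\mu(t-s)}\frac{\dot\beta(s)}{\beta(s)}\,ds\le\frac1\delta\frac{\dot\beta(t)}{\beta(t)}$. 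Thus $E(t)=\mathcal{O}\big(e^{-\mu t}+\frac{\dot\beta(t)}{\beta(t)}\big)$; with $E(t)\ge c\|x(t)-y(t)\|^2$ this is \eqref{eq:8-2}. Since then $E$ is bounded, $f(x(t))-\min f\le\varphi_t(x(t))-\min f=[\varphi_t(x(t))-\varphi_t(y(t))]+[\varphi_t(y(t))-\min f]\le\frac{E(t)}{\beta(t)}+\frac{c\|x^*\|^2}{2\beta(t)}=\mathcal{O}(1/\beta(t))$, which is \eqref{eq:8-1}, and the descent lemma at $x^*$ (where $\nabla f(x^*)=0$, using that $\nabla f$ is $L$-Lipschitz) gives $\|\nabla f(x(t))\|^2\le 2L\,(f(x(t))-\min f)=\mathcal{O}(1/\beta(t))$, which is \eqref{eq:8-3}.

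For the last part, assume $\frac{\dot\beta(t)}{\beta(t)}\to0$; then $E(t)\to0$, so $\|x(t)-y(t)\|\to0$, and together with $y(t)\to x^*$ strongly this yields $x(t)\to x^*$ strongly. Moreover $\beta(t)\big(\varphi_t(x(t))-\varphi_t(y(t))\big)\le E(t)\to0$, so $\varphi_t(x(t))-\varphi_t(y(t))=o(1/\beta(t))$, and since $\varphi_t(y(t))-\min f\le\frac{c}{2\beta(t)}\|x^*\|^2$ we obtain $f(x(t))-\min f=[\varphi_t(x(t))-\varphi_t(y(t))]+[\varphi_t(y(t))-\min f]-\frac{c}{2\beta(t)}\|x(t)\|^2\le o(1/\beta(t))+\frac{c}{2\beta(t)}\big(\|x^*\|^2-\|x(t)\|^2\big)=o(1/\beta(t))$ because $\|x(t)\|\to\|x^*\|$; combined with $\|\nabla f(x(t))\|^2\le2L(f(x(t))-\min f)$ this gives \eqref{eq:8-4}. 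The main obstacle is the second step: squeezing the derivative of $E$ into the clean form $\dot E\le-\mu E+C\frac{\dot\beta}{\beta}$ — with the coefficient of $E$ being exactly $-\mu$ — requires the strong convexity of $\varphi_t$, Lemma \ref{lemma 2}(ii), the AM–GM absorption of the $\|x(t)-y(t)\|\,\|y(t)\|$ term and the bound $\frac{\dot\beta}{\beta}\le c-\mu$ to fit together precisely; a secondary point is to verify that $(\mathbf{H}_\beta)(iii)$ is exactly what upgrades the Grönwall integral from $\mathcal{O}(1)$ to $\mathcal{O}\big(\frac{\dot\beta}{\beta}\big)$.
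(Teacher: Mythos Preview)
Your proof is correct and follows essentially the same Lyapunov strategy as the paper: the same energy $E(t)$, the same differentiation ingredients (Lemma~\ref{lemma 2}, strong convexity of $\varphi_t$, and \eqref{systeme continue1}), and the same Gr\"onwall integration to reach the estimate \eqref{estim E}. Your tactical choices differ only slightly --- you absorb the $\dot y$ cross term by first extracting $-\tfrac{c\dot\beta}{2\beta}\|x-y\|^2$ via $f\le\varphi_t$ and then applying a direct AM--GM (whereas the paper uses the parametrized Young inequality with $\lambda=c-\mu$), and you invoke $(\mathbf{H}_\beta)(iii)$ only through its consequence that $\mu+\tfrac{\ddot\beta}{\dot\beta}-\tfrac{\dot\beta}{\beta}\ge\delta>0$ for large $t$ (whereas the paper compares the right-hand side directly to $\tfrac{d}{dt}\big(\tfrac{\dot\beta}{\beta}e^{\mu t}\big)$) --- but these are minor variations within the same argument.
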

\begin{proof}
We have defined $E(t)$ in \eqref{3} by
$E(t)=A(t) + B(t)$ where 
$$
A(t)=\beta (t) \left( \varphi _t (x(t))- \varphi _t (y(t)) \right)   \hbox{and } B(t)=\dfrac{c}{2} \norm {x(t)-y(t)}^2.
$$
So, we need to fit a first  order differential inequality on $E(t)$ in order to be able to get for a positive constant $M$ and $t_1\geq t_0$ the desired upper bound 
\begin{equation}\label{estim E}
		E(t) \le  \frac{ E(t_1)e^{\mu t_1}}{e^{\mu t}} + \frac{c M\norm{x^*}^2 }{2} \dfrac{\dot{\b}(t)}{\b(t)} \;\;\hbox{ for each }t\geq t_1.
\end{equation}
 To calculate the derivative of the functions  $A(t)$ and $B(t)$, we remark that the mapping $t\mapsto y (t)$ is absolutely continuous (indeed locally Lipschitz), see \cite[section VIII.2]{Brezis2}. We conclude that $y(t)$ is  almost everywhere  differentiable on $[t_0,+\infty[$, and then using differentiability of $f$, we also deduce  almost everywhere  differentiability of $A(t)$ and $B(t)$ on the same intervalle.
Thus, we have
$$
\dot{A}(t)=\dot{\beta} (t)\left( \varphi _t (x(t))- \varphi _t (y(t)) \right) + \beta (t) \left(  \dfrac{d}{dt} \left(\varphi _t (x(t)) \right) - \dfrac{d}{dt} \left(\varphi _t (y(t)) \right) \right) .
$$
Using the derivative calculation and \eqref{systeme continue1}, we remark that
\begin{equation*}
\begin{array}{lll}
\dfrac{d}{dt} \left(\varphi _t (x(t)) \right) &=& \dfrac{d}{dt}\left( f(x(t))+\dfrac{c}{2\beta (t)}\norm{x(t)}^2 \right) \\\\
&=& \large\langle \dot{x}(t)\, , \, \nabla f(x(t))  \large\rangle - \dfrac{c\dot{\beta } (t)}{2\beta ^2 (t)} \norm{x(t)}^2 +\dfrac{c}{\beta (t)} \large\langle \dot{x}(t) \, , \, x(t) \large\rangle \\\\
&=& \large\langle \dot{x}(t)\, , \, \nabla f(x(t))+ \dfrac{c}{\beta (t)} x(t)  \large\rangle - \dfrac{c\dot{\beta } (t)}{2\beta ^2 (t)} \norm{x(t)}^2  \\\\
&=& \large\langle \dot{x}(t)\, , \, \nabla \varphi _t (x(t))  \large\rangle- \dfrac{c\dot{\beta } (t)}{2\beta ^2 (t)} \norm{x(t)}^2 \\
&=& -\beta(t)\left\|  \nabla \varphi _t (x(t))  \right\|^2- \dfrac{c\dot{\beta } (t)}{2\beta ^2 (t)} \norm{x(t)}^2  
\end{array}
\end{equation*}
On the other hand, according to  Lemma \ref{lemma 2} (i), we have
 $$ 
 \dfrac{d}{dt} \left(\varphi _t (y (t)) \right) =  \dfrac{-c\dot{\beta}(t)}{2\beta ^2 (t)}\norm{y(t)}^2 .
 $$
Thus
\begin{equation}\label{16}
\dot{A}(t)= \dot{\beta} (t)\left( \varphi _t (x(t))- \varphi _t (y(t)) \right)   -
\beta^2 (t) \left[  \left\|  \nabla \varphi _t (x(t))  \right\|^2 + \dfrac{c\dot{\beta } (t)}{2\beta ^3 (t)}\left(  \norm{x(t)}^2- \norm{y(t)}^2 \right) \right] .
\end{equation}
We also have
\begin{equation*}
\begin{array}{lll}
\dot{B} (t) &=&   c\large\langle x(t) - y(t)  \, , \,  \dot{x}(t)- \dot y(t)  \large\rangle\\
&=&  - c\large\langle x(t) - y(t)  \, , \, \beta (t) \nabla \varphi _t (x(t)) + \dot y(t) \large\rangle .
\end{array}
\end{equation*}
Noting that for every $u,v\in \mathcal{H},\, \lambda >0$, $\langle u,v\rangle\leq \frac{\lambda}2\| u\|^2+\frac1{2\lambda}\|v\|^2$,  we get
\begin{equation*} 
- c \large\langle  x(t) - y(t) \, , \, \dot y(t) \large\rangle \leq \dfrac{c}{2}\left( {\lambda}\norm{ x(t) - y(t)}^2 + \frac1{\lambda}\norm{ \dot y(t)}^2 \right).
\end{equation*}
Using  Lemma \ref{lemma 2} (ii), we  obtain  $ \norm{ \dot y(t)} \leq \dfrac{\dot{\beta}(t)}{\beta(t)} \norm{y (t)}\leq \dfrac{\dot{\beta}(t)}{\beta(t)} \norm{x^*}$; we conclude 
\begin{equation}\label{8}
- c \large\langle  x(t) - y(t) \, , \, \dot y(t) \large\rangle \leq \dfrac{c}{2}\left( {\lambda}\norm{ x(t) - y(t)}^2 +  \dfrac{\dot{\beta}^2(t)}{\lambda\beta^2(t)} \norm{x^*}^2 \right) .
\end{equation}
Observe that for $c>0$ the function  $\varphi _t$ is $\frac{c}{\b(t)}$-strongly convex,  then we have
\begin{equation*}
 - c \beta (t)\large\langle \nabla \varphi _t (x(t))  \, , \,   x(t) - y(t)  \large\rangle  \leq  -  c \beta (t) \left( \varphi _t (x(t))-  \varphi _t \left(y(t)\right) \right) - \dfrac{c^2}{2} \norm{  x(t) - y(t) }^2 .
\end{equation*}
Thus
\begin{equation}\label{9}
\begin{array}{lll}
\dot{B}(t) &\leq&    \dfrac{c}{2}\left( \lambda\norm{ x(t) - y(t)}^2 +  \dfrac{\dot{\beta}^2(t)}{\lambda\beta^2(t)} \norm{x^*}^2 \right) \\
	&&-  c \beta (t) \left( \varphi _t (x(t))-  \varphi _t \left(y(t)\right) \right) - \dfrac{c^2}{2} \norm{  x(t) - y(t) }^2 \\
&=&  \dfrac{(\lambda-c)c}{2} \norm {x(t)-y(t)}^2  + \dfrac{c\dot{\beta}^2(t)}{2\lambda\beta^2(t)} \norm{x^*}^2  -  c \beta (t) \left( \varphi _t (x(t))-  \varphi _t \left(y(t)\right) \right) .
\end{array}
\end{equation}
Due to the inequalities \eqref{16} and \eqref{9}, we get
\begin{equation}\label{10}
\begin{array}{lll}
\dot{E}(t) &\leq &  \dot{\beta} (t)\left( \varphi _t (x(t))- \varphi _t (y(t)) \right)   -
\beta^2 (t) \left[  \left\|  \nabla \varphi _t (x(t))  \right\|^2 + \dfrac{c\dot{\beta } (t)}{2\beta ^3 (t)}\left(  \norm{x(t)}^2- \norm{y(t)}^2 \right) \right] \\
	& & +  \dfrac{(\lambda-c)c}{2} \norm {x(t)-y(t)}^2  + \dfrac{c\dot{\beta}^2(t)}{2\lambda\beta^2(t)} \norm{x^*}^2  -  c \beta (t) \left( \varphi _t (x(t))-  \varphi _t \left(y(t)\right) \right)  \\
	& = & \left(  \dot{\beta} (t)-c\beta (t)\right) \left( \varphi _t (x(t))- \varphi _t (y(t)) \right)   -
\beta^2 (t)  \left\|  \nabla \varphi _t (x(t))  \right\|^2 \\\\
&\,\,&- \dfrac{c\dot{\beta } (t)}{2\beta  (t)}\left(  \norm{x(t)}^2- \norm{y(t)}^2 \right)  +  \dfrac{(\lambda-c)c}{2} \norm {x(t)-y(t)}^2  + \dfrac{c\dot{\beta}^2(t)}{2\lambda\beta^2(t)} \norm{x^*}^2 \\
\end{array}
\end{equation}
and thus,  for all $t \geq t_0$ we have
$$
\begin{array}{lll}
\dfrac{d}{dt} \left[e^{\mu t} E(t) \right] &=& \left( \dot{E}(t)+ \mu E(t) \right)e^{\mu t} \\
	&\leq&  \Big[  \left(  \dot{\beta} (t)-(c-\m)\beta (t)\right) \left( \varphi _t (x(t))- \varphi _t (y(t)) \right)   -
\beta^2 (t)  \left\|  \nabla \varphi _t (x(t))  \right\|^2 \\
&\,\,&
- \dfrac{c\dot{\beta } (t)}{2\beta  (t)}\left(  \norm{x(t)}^2- \norm{y(t)}^2 \right)   +  \dfrac{(\lambda+\mu-c)c}{2} \norm {x(t)-y(t)}^2  \\
&&  + \dfrac{c\dot{\beta}^2(t)}{2\lambda\beta^2(t)} \norm{x^*}^2   \Big] e^{\mu t} .
\end{array}
$$
Using assumption \ref{H0} (ii) and choosing $\lambda=c-\m>0$, we conclude that  for $t\geq t_0$ large enough
\begin{equation}\label{11}
\dfrac{d}{dt} \left[e^{\mu t} E(t) \right] \leq   \dfrac{c\dot{\beta } (t)}{2\beta  (t)}\left(   \norm{y(t)}^2 + \dfrac{\dot{\beta } (t)}{(c-\m)\beta  (t)} \norm{x^*}^2\right)  e^{\mu t} .
\end{equation}
Since $\; \norm{y(t)} \leq \norm{x^*}\;  $  (see Lemma \ref{lemma 2} (ii)), we conclude
\begin{equation}\label{13}
\dfrac{d}{dt} \left[e^{\mu t} E(t) \right] \leq   \dfrac{c\dot{\beta } (t)}{2\beta  (t)}\left(  1 + \dfrac{\dot{\beta } (t)}{(c-\m)\beta  (t)}\right)  e^{\mu t} \norm{x^*}^2.
\end{equation}

	Now return to condition \ref{H0}(iii), which means
	$$
	\underset{t\rightarrow +\infty}{\limsup} \,\dfrac{\dfrac{\dot{\beta } (t)}{\beta  (t)} \left(  1 + \dfrac{\dot{\beta } (t)}{(c-\m)\beta  (t)}\right)e^{\mu t } }{ \dfrac{d}{dt}\left(\dfrac{\dot{\b}(t)}{\b(t)}e^{\mu t }\right)} <+\infty,
	$$
we obtain existence of $t_1\geq t_0$ and $M>0$ such that for each $t\geq t_1$
	\begin{equation} \label{e14}
	\dfrac{d}{dt} \left[e^{\mu t} E(t) \right] \leq \frac{cM \norm{x^*}^2 }{2} \dfrac{d}{dt}\left(\dfrac{\dot{\b}(t)}{\b(t)}e^{\mu t }\right) .
	\end{equation}
	By integrating on $[t_1,t]$, we get  
	\begin{eqnarray}
		E(t) &\le&  \frac{ E(t_1)e^{\mu t_1}}{e^{\mu t}} + \frac{cM\| x^* \|^2 }{2e^{\mu t}} \int_{t_1}^{t} 
		\dfrac{d}{ds} \left(\dfrac{\dot{\b}(s)}{\b(s)}e^{\mu s }\right)ds \nonumber\\
		 &\leq&  \frac{ E(t_1)e^{\mu t_1}}{e^{\mu t}} + \frac{c M\norm{x^*}^2 }{2} \dfrac{\dot{\b}(t)}{\b(t)} .   \label{Ee1}
	\end{eqnarray}
 According to the definition of $ \varphi_{t}$,  we have 
 \begin{equation*}
\begin{array}{lll}
f(x(t)) &-& \min_{\mathcal H}f	=  \varphi_{t}(x(t))-\varphi_{t}(x^*)+\dfrac{c}{2\b (t)}\left(\|x^*\|^{2}-\|x(t)\|^{2}\right)  \\ 
	& = & \left[\varphi_{t}(x(t))-\varphi_{t}(y(t))\right]+ [\underbrace{\varphi_{t}(y(t))-\varphi_{t}(x^*)}_{\leq 0} ]+\dfrac{c}{2\b (t)}\left(\|x^*\|^{2}-\|x(t)\|^{2}\right)\\
	& \leq  &\varphi_{t}(x(t))-\varphi_{t}(y(t))+\dfrac{c}{2\b (t)}\left(\|x^*\|^{2}-\|x(t)\|^{2}\right).
\end{array}
\end{equation*}
By definition of $E(t)$,\;  
$$
\varphi_{t}(x(t))-\varphi_{t}(y(t)) \leq \dfrac{E(t)}{\b (t)},
$$
 which, combined with the above inequality and \eqref{Ee1}, gives 
 \begin{equation}\label{eq:17a}
f(x(t)) - \min_{\mathcal H}f \leq \dfrac{1}{\b (t)}\left(\frac{ E(t_1)e^{\mu t_1}}{e^{\mu t}} + \frac{c M\norm{x^*}^2 }{2} \dfrac{\dot{\b}(t)}{\b(t)}+\frac{c}{2}\left(\|x^*\|^{2}-\|x(t)\|^{2}\right)\right). 
\end{equation}
This means, according to \ref{H0}(ii), that 
$f$ satisfies \eqref{eq:8-1}.
\\
Also, 
using the definition of $E(t)$, we get
$$
E(t) \geq \frac{c}{2}  \|x(t) - y(t)\|^2,
$$
which gives 
 \begin{equation*}
 \|x(t) - y(t)\|^2 \leq \frac{2}{c}  E(t)  \leq  \frac{2}{c}\left(\frac{ E(t_1)e^{\mu t_1}}{e^{\mu t}} + \frac{c M\norm{x^*}^2 }{2} \dfrac{\dot{\b}(t)}{\b(t)}\right),
\end{equation*}
and therefore \eqref{eq:8-2} is satisfied.
\\
Return to \ref{H0}(ii), we justify $(x(t))$ is bounded, and then combining \eqref{eq:17a} and Lemma \ref{ext_descent_lemma} we deduce  existence of $L>0$ such that for $t$ large enough
\begin{eqnarray}\label{eq:18b}
 \| \nabla f (x)\|^2  &\leq& 2L(f(x) - \min_{\cH} f) \nonumber\\
 &\leq&  \dfrac{2L}{\b (t)}\left(\frac{ E(t_1)e^{\mu t_1}}{e^{\mu t}} + \frac{c \norm{x^*}^2 }{2} \dfrac{\dot{\b}(t)}{\b(t)}+\frac{c}{2}\left(\|x^*\|^{2}-\|x(t)\|^{2}\right)\right).
\end{eqnarray}
We conclude, for $t$ large enough, the estimation \eqref{eq:8-3}. \\
To ensure \eqref{eq:8-4}, we come back to the condition $\frac{\dot{\b}(t)}{\b(t)}\rightarrow 0$. Then  $\|x(t) - y(t)\|\rightarrow 0$ and strong convergence of $y(t)$ towards $x^*$ ensures that  $x(t)-x^*$ strongly converges to zero. Thus $\lim_{t\rightarrow+\infty}\left(\|x^*\|^{2}-\|x(t)\|^{2}\right)=0$, and appealing to inequalities \eqref{eq:17a}, \eqref{eq:18b}, we deduce \eqref{eq:8-4}.
\end{proof}
%
\section{Particular cases on the choice of $\b(t)$}\label{sec3}

We illustrate the theoretical conditions on $\beta(t)$ by two interesting examples:
\subsection{Case $\beta (t)=t^m\ln^pt$}

Consider the positive function $\beta (t)=t^m\ln^pt$ for $t\geq t_0>0$ and $(m,p)\in \mathbb R_+^2\setminus\{(0,0)\}$. We have
\begin{eqnarray*}
&&\dot\b(t) = t^{m-1}\ln^{p-1}t \left(m\ln t +p\right),\\
&&\ddot\b(t) = t^{m-2}\ln^{p-2}t \left(m(m-1)\ln^2 t + (2m-1)p\ln t + p(p-1)\right).\\
\end{eqnarray*}
Then, for $t$ large enough, we have the function $\frac{\dot\b(t)}{ \b(t)}$ (respectively $\frac{\ddot\b(t)}{\dot \b(t)}$) is equivalent to $\frac{p}{t\ln t}$ if $m=0$, and $\frac{p}{t}$ if $m\neq 0$ (respectively to $\frac{m-1}{t}$ if $m\neq 1$, and $\frac{1}{t\ln t}$ if $m= 1$).\\
 Thus
	$
	\underset{t\rightarrow +\infty}{\limsup} \,\frac{  1 + \frac{\dot{\beta } (t)}{(c-\m)\beta  (t)}}{\mu + \frac{\ddot{\beta } (t)}{{\dot\beta} (t)} - \frac{\dot{\b}(t)}{\b(t)}}= \dfrac1{\mu} <+\infty,
	$ for each $(m,p)\in \mathbb R_+^2\setminus\{(0,0)\}$.\\
	We conclude that condition \ref{H0} is satisfied whenever $0<\m< c$.
	\begin{corollary}\label{CorCont1}
	If $f: \mathcal{H} \rightarrow \mathbb{R}$  satisfies  \ref{H01}, $\beta(t)=t^m\ln^pt$ for $t\geq t_0>0, (m,p)\in \mathbb R_+^2\setminus\{(0,0)\}$ and $x: [t_0 , +\infty[ \rightarrow \mathcal{H}  $ is a solution of  \eqref{systeme continue}. Then
 $x(t)$ strongly converges to $x^*$, and  for  $t \geq t_0$  large enough
\begin{eqnarray}
\label{eq:20-1b}&&f(x(t)) - \min_{\mathcal H}f = o\left( \dfrac{1}{t^m\ln^pt}\right);\\
\label{eq:20-2b}&& \|x(t) - y(t)\|^2  = \left\{\begin{array}{ll} 
\mathcal{O}\left( \dfrac{1}{t\ln t}\right) & \hbox{ if  } m=0,\\
\mathcal{O}\left( \dfrac{1}{t}\right) & \hbox{ if } m\neq 0;
\end{array}\right.\\
\label{eq:20-3b}&&\norm{\nabla f(x(t))}^2 	=  o\left( \dfrac{1}{t^m\ln^pt}\right).
\end{eqnarray}
	\end{corollary}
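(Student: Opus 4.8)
The plan is to obtain Corollary~\ref{CorCont1} as an immediate application of Theorem~\ref{Th1}: the only thing to verify is that the choice $\beta(t)=t^m\ln^pt$ satisfies hypothesis~\ref{H0} together with the extra requirement $\dot\beta(t)/\beta(t)\to 0$ that triggers the ``moreover'' part of the theorem, after which \eqref{eq:20-1b}--\eqref{eq:20-3b} follow by substituting the asymptotics of $\beta$ and $\dot\beta$ into \eqref{eq:8-1}--\eqref{eq:8-4}.

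First I would fix any $\mu\in(0,c)$ and settle \ref{H0}(i). Since $m,p\ge 0$ and $(m,p)\neq(0,0)$, one has $m\ln t+p>0$ for $t$ large, so $\dot\beta(t)=t^{m-1}\ln^{p-1}t\,(m\ln t+p)$ is strictly positive, while $\beta$ is positive and $\mathcal{C}^2$ for $t$ large; choosing $t_0$ beyond this threshold gives \ref{H0}(i). Next, from the formulas for $\dot\beta$ and $\ddot\beta$ recorded just before the statement I would extract the elementary asymptotics
\[
\frac{\dot\beta(t)}{\beta(t)}=\frac{m}{t}+\frac{p}{t\ln t} \longrightarrow 0,\qquad
\frac{\ddot\beta(t)}{\dot\beta(t)}=\mathcal{O}(1/t) \longrightarrow 0\qquad(t\to+\infty).
\]
The first limit yields \ref{H0}(ii) for $t$ large (enlarge $t_0$ so that $\dot\beta(t)/\beta(t)\le c-\mu$), and the two limits together give
\[
\limsup_{t\to+\infty}\frac{1+\dfrac{\dot\beta(t)}{(c-\mu)\beta(t)}}{\mu+\dfrac{\ddot\beta(t)}{\dot\beta(t)}-\dfrac{\dot\beta(t)}{\beta(t)}}=\frac{1}{\mu}<+\infty,
\]
because the numerator tends to $1$ while the denominator tends to $\mu>0$; this is \ref{H0}(iii).

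Having checked \ref{H0} and $\dot\beta(t)/\beta(t)\to 0$, I would apply Theorem~\ref{Th1}. Its conclusion under the additional limit assumption gives the strong convergence $x(t)\to x^*$ and the estimates \eqref{eq:8-4}, which read as \eqref{eq:20-1b} and \eqref{eq:20-3b} once $\beta(t)=t^m\ln^pt$ is inserted. For \eqref{eq:20-2b} I would start from \eqref{eq:8-2}; since $e^{-\mu t}$ is $o(t^{-k})$ for every $k$, it is absorbed by $\dot\beta(t)/\beta(t)$, so $\|x(t)-y(t)\|^2=\mathcal{O}(\dot\beta(t)/\beta(t))$, and the displayed identity for $\dot\beta/\beta$ shows this is of order $1/(t\ln t)$ when $m=0$ (recall that then $p\neq0$) and of order $1/t$ when $m\neq0$.

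I do not expect a genuine obstacle here, since the whole argument is a verification of hypotheses; the step that most needs care is \ref{H0}(iii), where one must be sure that the denominator $\mu+\ddot\beta/\dot\beta-\dot\beta/\beta$ stays bounded away from $0$ — this is exactly where $\mu>0$ and the vanishing of both ratios $\dot\beta/\beta$ and $\ddot\beta/\dot\beta$ are used, and establishing that vanishing rests on the small case distinction $m=0$ versus $m=1$ versus $m\notin\{0,1\}$ in the behaviour of $\ddot\beta/\dot\beta$.
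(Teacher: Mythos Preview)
Your proposal is correct and follows essentially the same approach as the paper: verify that $\beta(t)=t^m\ln^p t$ satisfies \ref{H0} together with $\dot\beta/\beta\to 0$ by computing the asymptotics of $\dot\beta/\beta$ and $\ddot\beta/\dot\beta$, then invoke Theorem~\ref{Th1}. Your use of the exact identity $\dot\beta/\beta=m/t+p/(t\ln t)$ is in fact slightly cleaner than the paper's separate asymptotic equivalents, but the argument is otherwise identical.
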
	
\subsection{Case $\beta (t)=t^me^{\gamma t^r}$}
If $\beta (t)=t^me^{\gamma t^r}$ for $t\geq t_0>0, \; m\geq 0,\; \g>0$ and $ 0<r\leq 1$, then we have
\begin{eqnarray*}
&&\dot\b(t) = t^{m-1}e^{\gamma t^r} \left(m +r\g t^r\right),\\
&&\ddot\b(t) =  t^{m-2}e^{\gamma t^r} \left(m(m-1) + (2m+r-1)r\g t^r + r^2\g^2 t^{2r} \right).\\
\end{eqnarray*}
Then, for $t$ large enough, we have the function $\frac{\dot\b(t)}{ \b(t)}$ (respectively $\frac{\ddot\b(t)}{\dot \b(t)}$) is equivalent to $\frac{r\g}{ t^{1-r}}$ (respectively to  $\frac{r\g}{ t^{1-r}}$).\\
 Thus
	$$
	\underset{t\rightarrow +\infty}{\limsup} \,\frac{  1 + \frac{\dot{\beta } (t)}{\beta  (t)}}{\mu + \frac{\ddot{\beta } (t)}{{\dot\beta} (t)} - \frac{\dot{\b}(t)}{\b(t)}}= \left\{\begin{array}{lll}
	\dfrac{1}{\mu} <+\infty &\hbox{ if }& r<1,\\
	\dfrac{1+\g}{\mu} <+\infty &\hbox{ if }& r=1.
	\end{array}\right.
	$$
	We conclude that conditions on $\b$ are satisfied whenever  $0<\m< c$.
	\begin{corollary}\label{CorCont2}
	If $f: \mathcal{H} \rightarrow \mathbb{R}$  satisfies  \ref{H01}, $\beta (t)=t^me^{\gamma t^r}$ for $t\geq t_0>0, \; m\geq 0,\; \g>0,\; 0<r\leq 1$, and $x: [t_0 , +\infty[ \rightarrow \mathcal{H}  $ is a solution of  \eqref{systeme continue}. Then
for  $t \geq t_0$  large enough
\begin{eqnarray}
\label{eq:20-1}&&f(x(t)) - \min_{\mathcal H}f = \mathcal{O}\left( \dfrac{1}{t^me^{\gamma t^r}}\right);\\
\label{eq:20-2}&& \|x(t) - y(t)\|^2  = \mathcal{O}\left( \dfrac{1}{t^{1-r}}\right);\\
\label{eq:20-3}&&\norm{\nabla f(x(t))}^2 	= \mathcal{O}\left( \dfrac{1}{t^me^{\gamma t^r}}\right).
\end{eqnarray}
When $0<r<1$, we  conclude that  $x(t)$ strongly converges to $x^*$, 
$$ 
f(x(t)) - \min_{\mathcal H}f = o\left( \dfrac{1}{\b (t)}\right)\;\hbox{ and }\;\norm{\nabla f(x(t))}^2 	= o\left( \dfrac{1}{\b(t)}\right).
$$   
	\end{corollary}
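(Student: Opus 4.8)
The plan is to obtain Corollary~\ref{CorCont2} as a direct specialization of Theorem~\ref{Th1}: the only work is to check that $\b(t)=t^me^{\g t^r}$ satisfies \ref{H0}, after which the three $\mathcal O$-estimates of the theorem are rewritten in the stated closed form. First I would reuse the derivatives recorded just above, namely $\dot\b(t)=t^{m-1}e^{\g t^r}(m+r\g t^r)$ and $\ddot\b(t)=t^{m-2}e^{\g t^r}\bigl(m(m-1)+(2m+r-1)r\g t^r+r^2\g^2 t^{2r}\bigr)$, to settle \ref{H0}(i): $\b$ is plainly positive and of class $\mathcal C^2$ on $]0,+\infty[$, and since $m+r\g t^r>0$ we even have $\dot\b(t)>0$ for all $t>0$, so $\dot\b$ never vanishes.

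Next I would verify \ref{H0}(ii)--(iii) by inspecting the asymptotics of the logarithmic derivatives. Since $\dfrac{\dot\b(t)}{\b(t)}=\dfrac{m}{t}+r\g\,t^{r-1}$, this quantity tends to $0$ when $0<r<1$ and to $\g$ when $r=1$; in the first case $\dfrac{\dot\b(t)}{\b(t)}\le c-\m$ holds for $t$ large for every admissible $\m\in(0,c)$, and in the second case one restricts to $\g<c$ and picks $\m\in(0,c-\g)$ so that the same inequality holds eventually. For \ref{H0}(iii) I would use that $\dfrac{\ddot\b(t)}{\dot\b(t)}$ has the same asymptotic behaviour as $\dfrac{\dot\b(t)}{\b(t)}$ (both are equivalent to $\dfrac{r\g}{t^{1-r}}$, hence tend to $0$ if $r<1$ and to $\g$ if $r=1$), so that the denominator $\m+\dfrac{\ddot\b(t)}{\dot\b(t)}-\dfrac{\dot\b(t)}{\b(t)}$ converges to $\m>0$ while the numerator $1+\dfrac{\dot\b(t)}{(c-\m)\b(t)}$ stays bounded; therefore the $\limsup$ in \ref{H0}(iii) is finite (equal to $\tfrac{1}{\m}$ when $r<1$ and to $\tfrac{1+\g}{\m}$ when $r=1$).

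Once \ref{H0} is confirmed, Theorem~\ref{Th1} applies and immediately delivers $f(x(t))-\min_{\cH}f=\mathcal O(1/\b(t))$ and $\norm{\nabla f(x(t))}^2=\mathcal O(1/\b(t))$, which are precisely \eqref{eq:20-1} and \eqref{eq:20-3} after substituting $\b(t)=t^me^{\g t^r}$. For \eqref{eq:20-2} I would take the bound $\|x(t)-y(t)\|^2=\mathcal O\!\left(e^{-\m t}+\dfrac{\dot\b(t)}{\b(t)}\right)$ furnished by the theorem and note that $\dfrac{\dot\b(t)}{\b(t)}=\dfrac{m}{t}+r\g\,t^{r-1}$ is of exact order $t^{r-1}=1/t^{1-r}$ for large $t$ (equivalently $\mathcal O(1)$ when $r=1$), and that this term dominates the exponentially small $e^{-\m t}$; hence $\|x(t)-y(t)\|^2=\mathcal O(1/t^{1-r})$.

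Finally, when $0<r<1$ we have $\dfrac{\dot\b(t)}{\b(t)}\to 0$, which is exactly the extra hypothesis in the second part of Theorem~\ref{Th1}; invoking it yields the strong convergence $x(t)\to x^*$ together with the sharpened rates $f(x(t))-\min_{\cH}f=o(1/\b(t))$ and $\norm{\nabla f(x(t))}^2=o(1/\b(t))$. I do not expect any genuine obstacle here, since the argument is essentially a substitution into Theorem~\ref{Th1}; the one point that needs care is the borderline case $r=1$ in \ref{H0}(ii), where the existence of an admissible $\m$ forces the (implicit) restriction $\g<c$ on the exponential growth rate, and where, accordingly, only the $\mathcal O$-rates (not the strong convergence) survive.
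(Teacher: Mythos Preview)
Your proposal is correct and follows exactly the paper's approach: verify \ref{H0} for $\beta(t)=t^me^{\gamma t^r}$ using the computed derivatives and the asymptotics $\dot\beta/\beta\sim\ddot\beta/\dot\beta\sim r\gamma/t^{1-r}$, then read off the three $\mathcal O$-estimates and (for $r<1$) the $o$-estimates directly from Theorem~\ref{Th1}. You are in fact slightly more careful than the paper, which simply asserts that \ref{H0} holds ``whenever $0<\mu<c$'' without flagging that in the borderline case $r=1$ the inequality $\dot\beta/\beta\le c-\mu$ forces the implicit restriction $\gamma<c$ that you correctly isolate.
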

\begin{figure} 
 \includegraphics[scale=0.35]{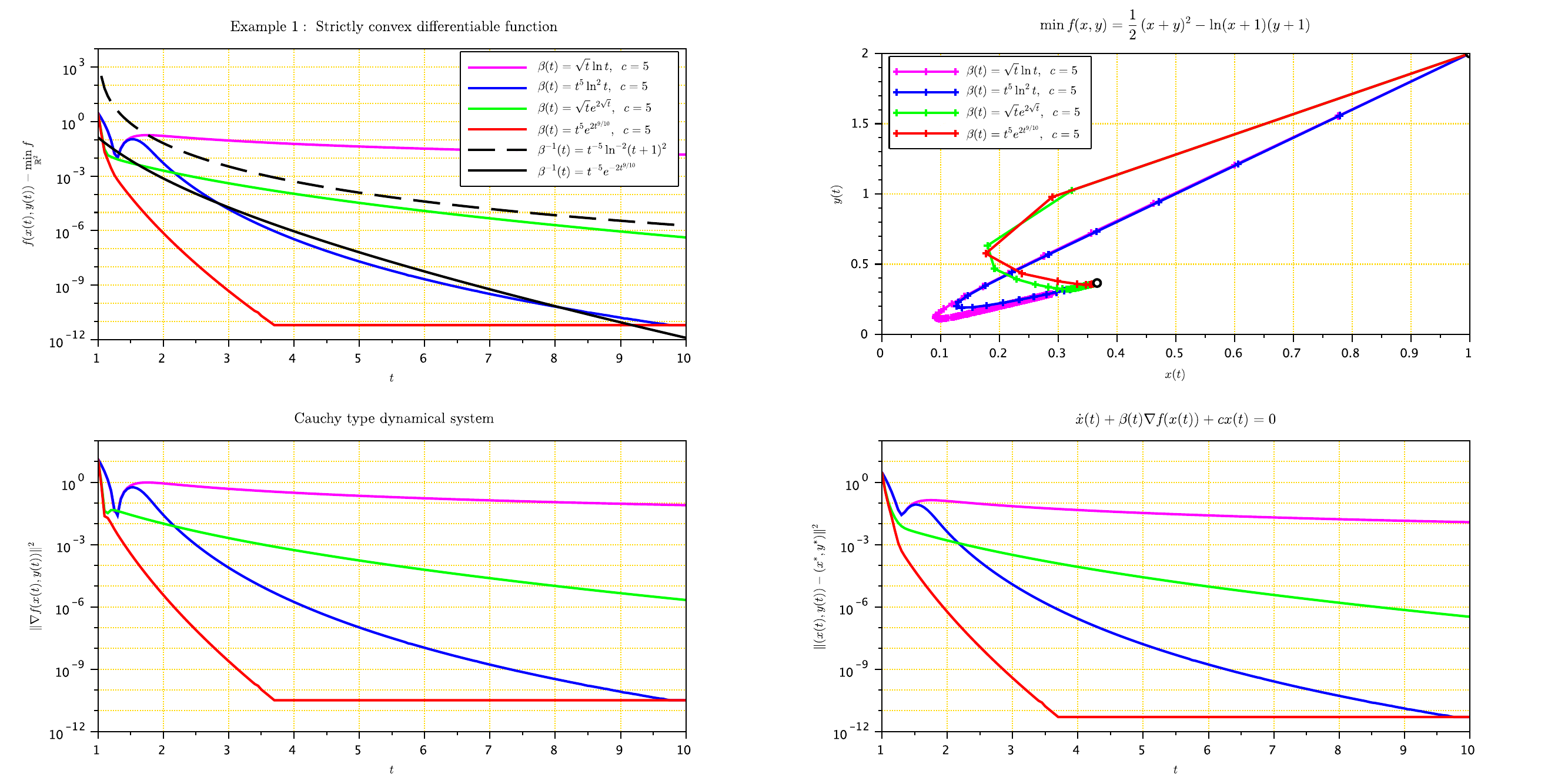}
  \caption{Evolution of convergence rates for values, trajectories and gradients when $c=5$ is fixed and $\b (t)$ changes expressions from logarithm to exponential. Note, see the figure  at the top left, that convergence rates respect our predictions in the results mentioned above.}
 \label{fig:trigs-c} 
\end{figure}

\section{Example for comparison of the convergence rate}\label{sec4}

\begin{example}\label{exemple1} 
Take $f  :  ]-1,+\infty[^{2} \to \R$ which is defined for $x=(x_1,x_2)$ by 
$$
f(x)=\frac12(x_1+x_2)^2-\ln(x_1+1)(x_2+1).
$$
\end{example}

 The function $f_1$ is strictly convex with gradient
$
\nabla f(x)=\begin{bmatrix}
x_1+x_2-\frac1{x_1+1}\\
x_1+x_2-\frac1{x_2+1}
\end{bmatrix}$ and Hessian $ \nabla^2 f(x)=\begin{bmatrix}1+\frac1{(x_1+1)^2} & 1\\1 & 1+\frac1{(x_2+1)^2}\end{bmatrix} $, so
the unique minimizer of $f$ is $x^*= \left((\sqrt {3}-1)/2,(\sqrt {3}-1)/2\right)$. 

The corresponding trajectories to the system \eqref{systeme continue} are depicted in Figure \ref{fig:trigs-c}. We note that the convergence rates of the values and gradients in this numerical test are consistent with those predicted in Corollaries \ref{CorCont1} and \ref{CorCont2}, while the convergence rates of the gradients are clearly stronger than those predicted theoretically. 
Let us consider the times-second-order systems, treated in the very recent papers: 
\[
\begin{array}{lll}
\hbox{(TRAL)}&&\dot{x}(t)+2t^2\ln^2t\nabla f\left(x(t)\right) + 5x(t)=0,\\
\hbox{(TRAE)}&&\dot{x}(t)+2t^2e^{2t^{9/10}}\nabla f\left(x(t)\right) + 5x(t)=0,\\
\hbox{(TRISAL)}&&\ddot{x}(t)+5\dot{x}(t)+2t^2\ln^2t\nabla f\left(x(t)\right) + 5x(t)=0,\hbox{ see \cite{BCR1}, }\\
\hbox{(TRISAE)}&&\ddot{x}(t)+5\dot{x}(t)+2t^2e^{2t^{4/5}}\nabla f\left(x(t)\right) + 5x(t)=0,\hbox{ see \cite{BCR1}, }\\
\hbox{(TRISG)}&&\ddot{x}(t)+5t^{-4/5}\dot{x}(t)+\nabla f\left(x(t)\right) +t^{-8/5}x(t)=0,\hbox{ see  \cite{ABCR-JDE}, } \\
\hbox{(TRISH)}&&\ddot{x}(t)+5t^{-4/5}\dot{x}(t)+\nabla f\left(x(t)\right) + 2\nabla^2 f\left(x(t)\right)\dot{x}(t) + t^{-8/5}x(t)=0, \hbox{ see  \cite{ABCR-JDE}.}
\end{array}
\]
By comparing  the two times-first-order systems, \eqref{systeme continue} where $\beta (t)$ is either equal to $2t^2\ln^2t$ or $2t^2e^{2t^{9/10}}$, with those of second order, we are surprised (see Figure \ref{fig:trigs-d}) by the better rate of convergence brought by these two reduced and inexpensive systems.
\begin{figure} 
 \includegraphics[scale=0.35]{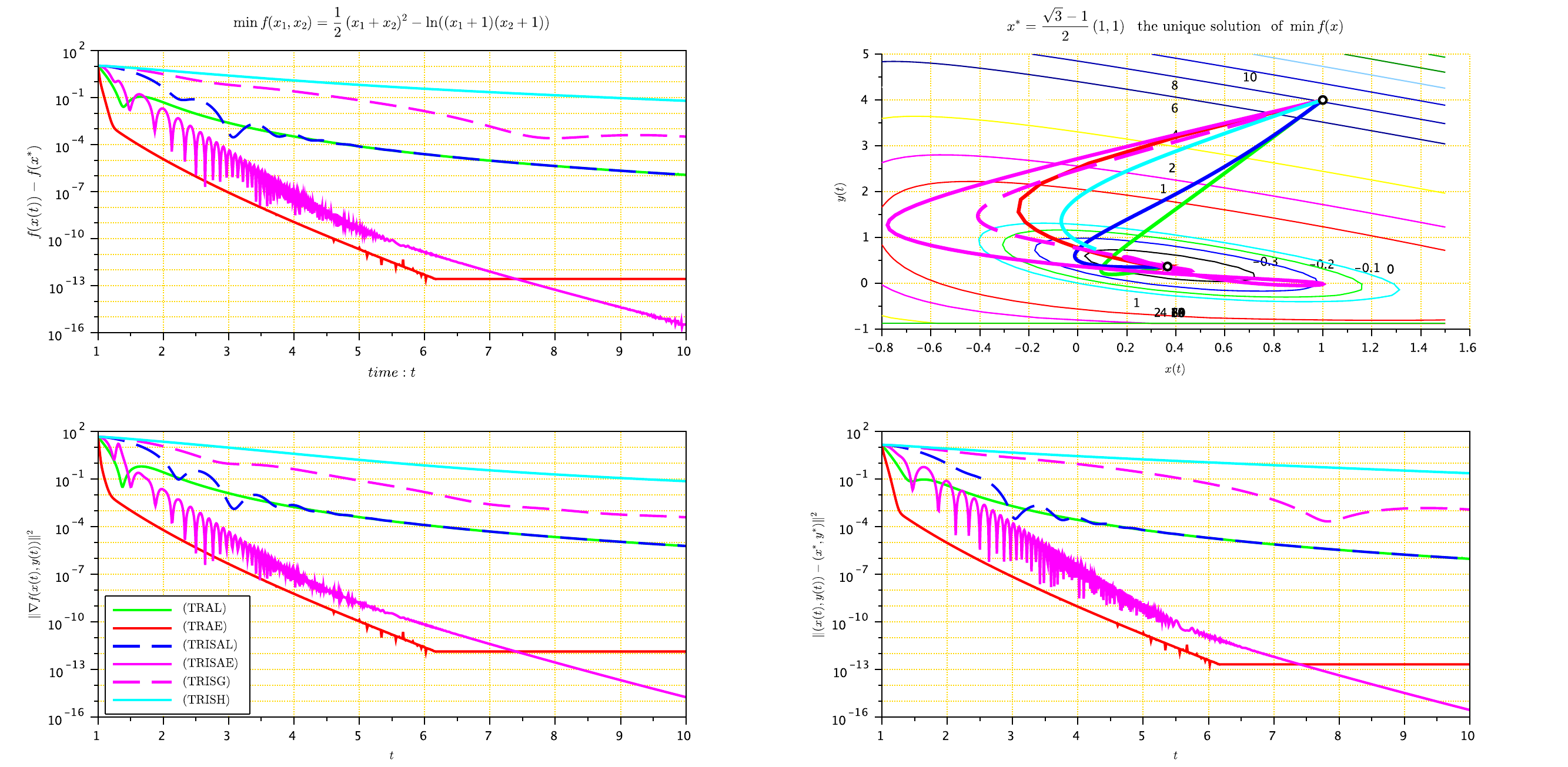}
  \caption{Here, we compare the convergence rates of values, trajectories and gradients for our first order system \eqref{continuous system} and those of the second order of recent references \cite{ABCR-JDE,BCR1}. }
 \label{fig:trigs-d} 
\end{figure}

\if{
\section{Implicit discretization for nonsmooth convex functions}\label{sec5}

Here, we suppose $f:\,\cH\rightarrow \mathbb R\cup\{ +\infty\}$ to  be a  proper lower semicontinuous convex function. 
Let us recall that an implicit discretization of the dynamical system \eqref{systeme continue} with $\varphi _k:=f+ \frac{1-d}{2\b_k}\norm{\cdot} ^2$, $\beta(k)=\b_k>0$ and $d=1-c>0$ leads to
$$
d(x_{k+1}-x_k) + \b_k   \p \varphi_k(x_{k+1}) =   x_{k+1} +\b_k\p f(x_{k+1})- dx_k \ni 0,
$$
where the time step size is equal to $1/d$.
We thus attain  the numerical scheme  \ref{proximal-algorithm} that reads  either as the Proximal method
\begin{eqnarray}
\label{alg_xk}
&&x_{k+1}= \hbox{prox}_{\frac{\b_k}{d} \varphi_k}( x_k)
\end{eqnarray}
or as the Proximal-Gradient method
\begin{eqnarray*}
&&x_{k+1}=\hbox{prox}_{\b_k f}( d x_k) = \hbox{prox}_{\b_k f}( x_k-c x_k) = \hbox{prox}_{\b_k f}\left( x_k-\b_k\nabla(\frac{c}{2\b_k}\|\cdot\|^2) (x_k)\right).
\end{eqnarray*}
We suppose the following condition:
\begin{equation*}
\tag*{$(\mathbf{H}_{\beta_k})$} 
  (\b_k)\textit{ is a nondecreasing positive sequence that satisfies }\, \underset{k\rightarrow +\infty}{\lim}\, \b_k= + \infty .
\end{equation*}
For each $k\geq k_0$, we denote by $y_{k}$ the unique minimizer of the strongly convex function
$ \varphi _k $, which means by the first order convex optimality condition
\begin{equation}\label{nn}
\p f (y_{k})+\dfrac{1-d}{\b_k} y_{k}\ni 0\; \iff \; y_k=\hbox{prox}_{\frac{1-d}{\b_k} f}(0).
\end{equation}
Let us recall (see Lemma \ref{lem-basic-c} \ref{2a},\ref{2b}) that the Tikhonov approximation curve $ k \mapsto y_{k}$ satisfies
\begin{equation}\label{x^*}
	\forall k\geq k_0, \;\; \norm{y_{k}}\leq \norm {x^*}
\hbox{ and }	\underset{k\rightarrow +\infty}{\lim} \norm{y_{k}-x^*} = 0.
\end{equation}
Now, for $\l$ a positive constant, we introduce the following  discrete energy function
\begin{equation}\label{E_k}
E_k = \b _k \left( \varphi_k (x_k)- \varphi_k (y_{k})  \right)+ \frac{\l}{2}\norm {x_k-y_{k-1}}^2, \;\; \hbox{for all }\,k\geq k_0.
\end{equation}
Before giving  this section's principal theorem, we need two very important lemmas. The first lemma relates the asymptotic behavior of the sequence $E_k$ to the convergence rate of values and iterations, the second  lemma provides a few properties of the viscosity curve $( y_{k})_k.$
\begin{lemma}\label{Lemme 1 discr}
Let $(x_k)$ be the sequence generated by the algorithm  \ref{proximal-algorithm}. Then for any $k \geq k_0$ we have:  
 \begin{equation}\label{con- val disr}
f(x_k)-\underset{\mathcal{H}}{\min} \,f \leq \dfrac{ E_k}{\b_k} + \dfrac{1-d}{2\b_k} \left(\norm{x^*}^2  -\norm{x_k}^2\right)  
\end{equation}
and
\begin{equation}\label{conv-tra discr}
\norm{x_k-y_{k}}^2 \leq \dfrac{2E_k }{1-d}.
\end{equation}
Therefore, $x_k$ converges strongly to $x^*$ as soon as $\underset{k\rightarrow +\infty}{\lim} E_k = 0$.
\end{lemma}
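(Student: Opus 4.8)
The plan is to transpose to the discrete setting the two purely algebraic steps that open the proof of Theorem~\ref{Th1}: the decomposition of the objective gap through the Tikhonov point $y_k$, and the lower estimate of the energy that comes from strong convexity. The iteration \ref{proximal-algorithm} itself plays no role in the argument; what is used is only that $y_k$ minimizes the $\frac{1-d}{\b_k}$-strongly convex function $\varphi_k=f+\frac{1-d}{2\b_k}\norm{\cdot}^2$, that $x_k,y_k\in\mathrm{dom}\,f$ (both being images of proximal operators, so $f(x_k)$ is finite), and the already recorded bounds $\norm{y_k}\le\norm{x^*}$ and $\norm{y_k-x^*}\to0$ from \eqref{x^*}.

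For \eqref{con- val disr} I would start from the identity
\[
f(x_k)-\underset{\mathcal H}{\min}\,f=\big(\varphi_k(x_k)-\varphi_k(x^*)\big)+\frac{1-d}{2\b_k}\big(\norm{x^*}^2-\norm{x_k}^2\big),
\]
which is merely the definition of $\varphi_k$ together with $f(x^*)=\min_{\mathcal H}f$. Splitting $\varphi_k(x_k)-\varphi_k(x^*)=\big(\varphi_k(x_k)-\varphi_k(y_k)\big)+\big(\varphi_k(y_k)-\varphi_k(x^*)\big)$ and dropping the second bracket, which is $\le0$ since $y_k$ minimizes $\varphi_k$, gives $\varphi_k(x_k)-\varphi_k(x^*)\le\varphi_k(x_k)-\varphi_k(y_k)$. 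As $\frac{\lambda}{2}\norm{x_k-y_{k-1}}^2\ge0$, the definition \eqref{E_k} of $E_k$ yields $\b_k\big(\varphi_k(x_k)-\varphi_k(y_k)\big)\le E_k$, hence $\varphi_k(x_k)-\varphi_k(y_k)\le E_k/\b_k$, and \eqref{con- val disr} follows.

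For \eqref{conv-tra discr} I would invoke the $\frac{1-d}{\b_k}$-strong convexity of $\varphi_k$ (it is the sum of the convex $f$ and the $\frac{1-d}{\b_k}$-strongly convex quadratic $\frac{1-d}{2\b_k}\norm{\cdot}^2$): evaluated at the minimizer $y_k$, where the first-order term vanishes, the strong-convexity inequality reads $\varphi_k(x_k)-\varphi_k(y_k)\ge\frac{1-d}{2\b_k}\norm{x_k-y_k}^2$. Multiplying by $\b_k$ and using once more $E_k\ge\b_k\big(\varphi_k(x_k)-\varphi_k(y_k)\big)$ gives $\frac{1-d}{2}\norm{x_k-y_k}^2\le E_k$, which is \eqref{conv-tra discr}. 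The final assertion is then immediate: if $E_k\to0$, \eqref{conv-tra discr} forces $\norm{x_k-y_k}\to0$, and combining this with $\norm{y_k-x^*}\to0$ from \eqref{x^*} through the triangle inequality yields $x_k\to x^*$ strongly. I do not anticipate any real obstacle here: every step is a one-line consequence of the definition of $E_k$, the variational characterization of $y_k$, and strong convexity. The only point worth stressing is that this lemma merely \emph{reduces} the entire convergence analysis to a control of the decay of $(E_k)$, whose substance is deferred to the subsequent estimates on the energy.
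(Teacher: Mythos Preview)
Your proof is correct and follows essentially the same approach as the paper: the same decomposition of $f(x_k)-\min f$ through $\varphi_k$ and the Tikhonov point $y_k$, the same use of $\varphi_k(y_k)\le\varphi_k(x^*)$ and of $E_k\ge\b_k(\varphi_k(x_k)-\varphi_k(y_k))$, and the same strong-convexity lower bound to obtain \eqref{conv-tra discr}. Your remark that the iteration \ref{proximal-algorithm} itself plays no role here is accurate and worth keeping.
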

\begin{proof}
 Observing  the definition of $\varphi _k$, one has
\begin{eqnarray}
f(x_k)-\underset{\mathcal{H}}{\min} \,f &=& \varphi _k (x_k) - \dfrac{1-d}{2\b_k}\norm{x_k}^2 -\varphi _k (x^*) + \dfrac{1-d}{2\beta _k}\norm{x^*}^2 		\nonumber\\
&=&  \left[  \varphi _k (x_k)-  \varphi _k (x_{\b_k})  \right] +\left[ \underbrace{\varphi _k (y_{k})- \varphi _k (x^*)}_{ \leq 0}\right] 		+\dfrac{1-d}{2\beta _k}\left(\norm{x^*}^2  -\norm{x_k}^2\right)  		\nonumber\\
&\leq &  \varphi _k (x_k)-  \varphi _k (y_{k}) +\dfrac{1-d}{2\beta _k}\left(\norm{x^*}^2  -\norm{x_k}^2\right)   . \label{eq21}
\end{eqnarray}
From definition of $E_k$, we obtain
\begin{equation}\label{3d}
 \varphi _k (x_k)-  \varphi _k (y_{k}) \leq \dfrac{ E_k}{\beta _k},
\end{equation}
which, combined with \eqref{eq21}, gives \eqref{con- val disr}.\\
 Using the strong convexity of $ \varphi _k$, and $y_{k}:=\underset{\mathcal{H}}{\argmin }\, \varphi _k$, we obtain
\begin{equation*}
  \varphi _k (x_k)-  \varphi _k (y_{k})\geq \dfrac{1-d}{2\beta _k} \norm{x_k -y_{k}}^2 .
  \end{equation*}
By combining the last inequality  with \eqref{3d}, we get
\begin{equation*}
 \dfrac{ E_k}{\beta _k} \geq  \dfrac{1-d}{2\beta _k} \norm{x_k-y_{k}}^2 ,
 \end{equation*}
which implies the  inequality \eqref{conv-tra discr}.
Recall that $\underset{k\rightarrow +\infty}{\lim} \norm{y_{k}-x^*} = 0$, so according to  $\underset{k\rightarrow +\infty}{\lim} E_k = 0$, we deduce that the sequence $(x_k )$ converges strongly to $x^*$. 
\end{proof}
\begin{lemma}\label{lemma 2 discr}
For every $k\geq k_0$, the following properties are satisfied:
\begin{itemize}
\item[i)] $\varphi _k(y_{k}) - \varphi _{k+1}(y_{k+1})\leq \dfrac{(1-d)}{2}\left( \dfrac{1}{\b_k} - \dfrac{1}{\b_{k+1}}\right)\norm {y_{k+1}}^2 ,$
\item[ii)] $\norm {y_{k+1}-y_{k}}^2 \leq \dfrac{\b_{k+1}-\b_k}{\b_{k+1}} \langle  y_{k+1} , y_{k+1}-y_{k} \rangle$,  and 
\item[iii)]  
$ \norm {y_{k+1}-y_{k}} \leq  \dfrac{\b_{k+1}-\b_k}{\b_{k+1}} \norm {y_{k+1}} .$
\end{itemize}
\end{lemma}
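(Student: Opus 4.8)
The three estimates are the discrete analogues of Lemma \ref{lemma 2}, and the plan is to derive all of them from the two defining features of the Tikhonov curve: that $y_k$ minimizes the strongly convex function $\varphi_k$, and the first-order optimality condition \eqref{nn}, namely $-\tfrac{1-d}{\beta_k}y_k\in\partial f(y_k)$.

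For part i), I would exploit minimality of $y_k$ for $\varphi_k$ to write $\varphi_k(y_k)\le\varphi_k(y_{k+1})$, so that
\[
\varphi_k(y_k)-\varphi_{k+1}(y_{k+1})\le \varphi_k(y_{k+1})-\varphi_{k+1}(y_{k+1}).
\]
Since $\varphi_k$ and $\varphi_{k+1}$ take the same value $f(y_{k+1})$ at $y_{k+1}$ and differ only through the quadratic coefficient $\tfrac{1-d}{2\beta_k}$ versus $\tfrac{1-d}{2\beta_{k+1}}$, the right-hand side collapses to $\tfrac{1-d}{2}\big(\tfrac1{\beta_k}-\tfrac1{\beta_{k+1}}\big)\|y_{k+1}\|^2$, which is exactly i).

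For part ii), the key tool is monotonicity of $\partial f$. From \eqref{nn} I read off $-\tfrac{1-d}{\beta_k}y_k\in\partial f(y_k)$ and $-\tfrac{1-d}{\beta_{k+1}}y_{k+1}\in\partial f(y_{k+1})$; monotonicity then yields, after dividing by $1-d>0$,
\[
\Big\langle \tfrac{1}{\beta_k}y_k-\tfrac{1}{\beta_{k+1}}y_{k+1},\,y_{k+1}-y_k\Big\rangle\ge 0 .
\]
The one manipulation that needs care is rewriting $\langle y_k,y_{k+1}-y_k\rangle=\langle y_{k+1},y_{k+1}-y_k\rangle-\|y_{k+1}-y_k\|^2$; substituting this and multiplying through by $\beta_k>0$ isolates $\|y_{k+1}-y_k\|^2$ on one side and produces the factor $1-\tfrac{\beta_k}{\beta_{k+1}}=\tfrac{\beta_{k+1}-\beta_k}{\beta_{k+1}}$ in front of $\langle y_{k+1},y_{k+1}-y_k\rangle$, giving ii).

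Finally, part iii) follows from ii) by a single Cauchy--Schwarz step $\langle y_{k+1},y_{k+1}-y_k\rangle\le\|y_{k+1}\|\,\|y_{k+1}-y_k\|$ and then dividing by $\|y_{k+1}-y_k\|$ (the claim being trivial when this norm vanishes, since $\beta_{k+1}\ge\beta_k$ keeps the right-hand side nonnegative). There is no genuine difficulty in this lemma; the only place demanding attention is the algebraic rearrangement in ii), where one must apply the monotonicity inequality in exactly the right direction and use the cross-term identity to recover the stated constant $\tfrac{\beta_{k+1}-\beta_k}{\beta_{k+1}}$ rather than a weaker bound.
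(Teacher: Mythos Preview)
Your proposal is correct and matches the paper's proof essentially line for line: part i) via $\varphi_k(y_k)\le\varphi_k(y_{k+1})$ and cancellation of $f(y_{k+1})$, part ii) via monotonicity of $\partial f$ applied to the optimality inclusions \eqref{nn}, and part iii) via Cauchy--Schwarz. The only cosmetic difference is that the paper passes directly from the monotonicity inequality to $\tfrac{1-d}{\beta_k}\|y_{k+1}-y_k\|^2\le\big(\tfrac{1-d}{\beta_k}-\tfrac{1-d}{\beta_{k+1}}\big)\langle y_{k+1},y_{k+1}-y_k\rangle$ without spelling out the cross-term identity you wrote, but the algebra is identical.
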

\begin{proof}
{\em i)} Recall that $y_{k}$ is the unique minimizer of the  function
$ \varphi _k=f+ \frac{1-d}{2\b_k}\norm{\cdot} ^2 $, then 
\begin{equation*}
\varphi _k (y_{k}) \leq \varphi _k (y_{k+1}) =f(y_{k+1})+\frac{1-d}{2\b_k}\norm{y_{k+1}}^2 ,  
\end{equation*}
and also 
\begin{equation*}
-\varphi _{k+1} (y_{k+1})=-f(y_{k+1})-\frac{1-d}{2\b_{k+1}}\norm{y_{k+1}}^2.
\end{equation*}
Summing these two inequalities, we get the first statement of this lemma.
\\
{\em ii)} By \eqref{nn}, we have
\begin{equation*}
\frac{d-1}{\b_k} y_{k}\in \p f(y_{k})\;\;\;\; \hbox{and}\;\;\;\; \frac{d-1}{\b_{k+1}} y_{k+1}\in\p  f(y_{k+1}).
\end{equation*}
The monotonicity of $\p f$ gives
\begin{equation*}
\left\langle \frac{1-d}{\b_k} y_{k}- \frac{1-d}{\b_{k+1}} y_{k+1} , y_{k+1}-y_{k} \right\rangle \geq 0 ,
\end{equation*}
it follows that
\begin{equation*}
\frac{1-d}{\b_k}\norm{y_{k+1}-y_k}^2\leq \left( \dfrac{1-d}{\b_k} - \dfrac{1-d}{\b_{k+1}}\right) \langle y_{k+1} , y_{k+1}-y_{k} \rangle  ,
\end{equation*}
which gives the second statement of this lemma. The last statement follows from Cauchy-Schwarz inequality.
\end{proof}
By adding the following hypothesis on $(\b_k)$:
\begin{equation*}\label{H02}
\tag*{$(\mathbf{H}_{\b_k})$}\left \{
\begin{array}{ll}
(i) & \dot{\b}_k := \b_{k+1}-\b_k  \neq 0, \hbox{ for } k \hbox{ large enough},\\
(ii)  &\underset{k\rightarrow +\infty}{\lim}\, \dfrac{\dot{\b} _{k+1}}{\dot{\b}_k} =\underset{k\rightarrow +\infty}{\lim}\, \dfrac{\b _{k+1}}{\b_k} =\ell >0;
\end{array}
\right.
\end{equation*}
let us show the main fast convergence results  of this section.
\begin{theorem}\label{Th 2 disc}
Let   $f: \mathcal{H} \rightarrow \mathbb{R}\cup\{ +\infty\}$ be a  proper lower semicontinuous convex function, with $\hbox{argmin}_{\mathcal{H}}f\neq\emptyset$, and $(x_k)$ be a sequence generated by the algorithm \ref{proximal-algorithm}. Let $\r\in]0,1-d[$ and suppose the condition \ref{H02}. 
Then, we have
\begin{itemize}
\item[\textbf{i)}]  for $k$ large enough  
\begin{eqnarray*}
&&f(x_k)-\underset{\mathcal{H}}{\min}f = \mathcal{O} \left( \dfrac{1}{ \beta _k} \right), \;\norm{\dot x_k }^2   = \mathcal{O} \left(    \dfrac{\dot{\b}_k}{\b_k}+e^{-\r k} \right) \\
&& \norm{ x_k -  y_{k} }^2   = \mathcal{O} \left(    \dfrac{\dot{\b}_k}{\b_k}+e^{-\r k} \right) \hbox{ and } d(0,\partial f(x_{k+1})) = \mathcal{O} \left(    \dfrac{1}{\b_k}\right).
\end{eqnarray*}
If $f$ is differentiable on $\mathcal{H}$,   we conclude strong convergence of the gradients to zero with the rate
\begin{equation}\label{eq:grad35}
\norm{ \nabla f(x_k)}  =  \mathcal{O} \left(    \dfrac{1}{\b_k}\right).
\end{equation}

\item[\textbf{ii)}]   Suppose $\ell=1$ in \ref{H02}, then the sequence $(x_k)$ generated by the algorithm \ref{proximal-algorithm} converges strongly to $x^*$  the element of minimum norm of $\hbox{argmin}_{\mathcal{H}}f$ and 
\[
f(x_k)-\underset{\mathcal{H}}{\min} \,f= o \left( \dfrac{1}{\b_k} \right).
\]
\end{itemize}
\end{theorem}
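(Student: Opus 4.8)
The plan is to run a discrete Lyapunov analysis paralleling the proof of Theorem~\ref{Th1}, with the sequence $E_k$ of \eqref{E_k} playing the role of $E(t)$ and $E_{k+1}-E_k$ replacing $\dot E(t)$. The target is a one-step recursive inequality of the form
\[
E_{k+1}\le(1-\eta_k)\,E_k+C\,\frac{\dot\b_k}{\b_k}\,\|x^*\|^2\qquad\text{for all $k$ large,}
\]
with a constant $C>0$ and contraction factors $\eta_k$ bounded below by a positive constant; this last point is exactly where the restriction $\r\in\,]0,1-d[$ and the limits in \ref{H02} are used. Iterating then gives $E_k=\mathcal{O}\big(e^{-\r k}+\dot\b_k/\b_k\big)$, and Lemma~\ref{Lemme 1 discr} turns this into $f(x_k)-\min_{\cH}f=\mathcal{O}(1/\b_k)$ and $\|x_k-y_{k}\|^2=\mathcal{O}\big(e^{-\r k}+\dot\b_k/\b_k\big)$.

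To derive the recursion I would start from the optimality condition characterizing \ref{proximal-algorithm}, namely $\frac{d}{\b_k}(x_k-x_{k+1})\in\partial\varphi_k(x_{k+1})$ with $\varphi_k=f+\frac{1-d}{2\b_k}\|\cdot\|^2$, and split $E_{k+1}-E_k$ into three blocks. First, a \emph{rescaling drift} $(\b_{k+1}-\b_k)\big(\varphi_k(x_k)-\varphi_k(y_k)\big)$, of order $\tfrac{\dot\b_k}{\b_k}E_k$, handled by \ref{H02}. Second, a genuine \emph{descent block}: the $\tfrac{1-d}{\b_k}$-strong convexity of $\varphi_k$ together with the subgradient inequality at $x_{k+1}$ produces a negative contribution proportional to $\varphi_k(x_{k+1})-\varphi_k(y_k)$ and to $\|x_k-x_{k+1}\|^2$ --- the discrete counterpart of the negative terms in \eqref{10}. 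Third, \emph{viscosity corrections}, coming from replacing $\varphi_k$ by $\varphi_{k+1}$ (estimated through Lemma~\ref{lemma 2 discr}(i) and $\|y_{k+1}\|\le\|x^*\|$, which is what produces the $\tfrac{\dot\b_k}{\b_k}\|x^*\|^2$ error) and from the mismatch between the quadratic terms $\|x_{k+1}-y_{k}\|^2$ and $\|x_k-y_{k-1}\|^2$, absorbed using Lemma~\ref{lemma 2 discr}(ii)--(iii), the bound $\|y_{k}-y_{k-1}\|\le\tfrac{\b_k-\b_{k-1}}{\b_k}\|x^*\|$, and Young's inequality $\langle u,v\rangle\le\tfrac{\l}{2}\|u\|^2+\tfrac{1}{2\l}\|v\|^2$ with a Young parameter $\l\in\,]0,1-d[$. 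I expect this third block to be the main obstacle: unlike the continuous case, the backward step creates genuinely new cross terms (in particular the $y_{k-1}\to y_{k}$ shift hidden inside $E_k$), and one must tune $\l$ and $\r$ so that, after using \ref{H02}(ii) to treat $\dot\b_{k+1}/\dot\b_k$ and $\b_{k+1}/\b_k$ as essentially constant, every surviving coefficient of $\|x_{k+1}-y_{k}\|^2$ and of $\varphi_k(x_{k+1})-\varphi_k(y_k)$ keeps the right sign while the remainder stays $\mathcal{O}(\dot\b_k/\b_k)$. This is the discrete analogue of passing from \eqref{10} to \eqref{13}.

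For the remaining assertions of part~(i): $(x_k)$ is bounded since $\|x_k-y_{k}\|^2=\mathcal{O}(1)$ and $\|y_{k}\|\le\|x^*\|$; the step estimate $\|\dot x_k\|^2:=\|x_{k+1}-x_k\|^2=\mathcal{O}\big(e^{-\r k}+\dot\b_k/\b_k\big)$ follows because, by the $\tfrac{1-d}{\b_k}$-strong convexity at $x_{k+1}$, $\|x_k-x_{k+1}\|^2$ is controlled by a fixed multiple of $\b_k\big(\varphi_k(x_k)-\varphi_k(y_k)\big)\le E_k$. For the subgradient, the optimality relation gives $\frac{d}{\b_k}(x_k-x_{k+1})-\frac{1-d}{\b_k}x_{k+1}\in\partial f(x_{k+1})$, whence $\mathrm{dist}\big(0,\partial f(x_{k+1})\big)\le\frac{d}{\b_k}\|x_k-x_{k+1}\|+\frac{1-d}{\b_k}\|x_{k+1}\|=\mathcal{O}(1/\b_k)$ by boundedness of $(x_k)$ and $\|\dot x_k\|\to0$; when $f$ is differentiable this is precisely \eqref{eq:grad35}, the prox step giving here directly the sharp $1/\b_k$ rate without recourse to the extended descent lemma used in \eqref{eq:18b}.

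\textbf{Part (ii).} If $\ell=1$, then \ref{H02}(ii) forces $\dfrac{\dot\b_k}{\b_k}=\dfrac{\b_{k+1}}{\b_k}-1\longrightarrow0$, hence $E_k\to0$ by part~(i), and Lemma~\ref{Lemme 1 discr} yields strong convergence $x_k\to x^*$. Consequently $\|x_k\|^2\to\|x^*\|^2$, so in the value estimate of Lemma~\ref{Lemme 1 discr} both $E_k/\b_k$ and $\tfrac{1-d}{2\b_k}\big(\|x^*\|^2-\|x_k\|^2\big)$ are $o(1/\b_k)$; therefore $f(x_k)-\min_{\cH}f=o(1/\b_k)$.
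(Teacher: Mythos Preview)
Your sketch is correct and follows essentially the same route as the paper: a recursive inequality for $E_k$ built from a descent block via the strong convexity of $\varphi_k$ at $x_{k+1}$, a rescaling drift $\dot\b_k(\varphi_k(x_{k+1})-\varphi_k(y_k))$, and viscosity corrections handled by Lemma~\ref{lemma 2 discr} plus Young's inequality, then summed after weighting by $e^{\r k}$. The only cosmetic discrepancy is that in the paper $\lambda$ denotes the coefficient in the definition of $E_k$ (chosen large so that the coefficient of $\varphi_k(x_{k+1})-\varphi_k(y_k)$ becomes nonpositive), while the Young parameter is a separate constant $b=\tfrac{d}{1-(1+\mu)d}$ with $\mu=\tfrac{\r}{1-\r}$, and the contraction is obtained by adding $\mu E_{k+1}$ rather than $\mu E_k$; otherwise your outline matches the paper's proof.
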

\begin{proof}
To simplify the writing of the proof, we use the following notations: $v_k:=x_k -y_{k-1}$ and
for any sequence $(u_k)$ in $\mathcal{H}$, we write 
$ \dot{u}_k:=u_{k+1}-u_k.$
\\
From \eqref{E_k}, we have
\begin{equation*}
\begin{array}{lll}
\dot{E}_{k} &\leq& \b _{k+1 } \left( \varphi_{k+1 } (x_{k+1 })- \varphi_{k+1 } (y_{k+1 })  \right)- \b _k \left( \varphi_k (x_k)- \varphi_k (y_{k})  \right)\\\\
&\, &+ \frac{\l}{2}\left( \norm {v_{k+1}}^2-\norm {v_k}^2\right)\\\\
&=& \b_k \left[ \varphi_{k} (x_{k+1 })- \varphi_{k } (x_{k })  \right]+\dot{\b}_k \left[ \varphi_{k} (x_{k+1 })- \varphi_{k } (y_k)  \right]\\\\
&\;&+\b_{k+1} \left[ \varphi_{k+1} (x_{k+1})-\varphi_{k} (x_{k+1})+\varphi_k (y_{k}) -\varphi_{k+1} (y_{k+1}  ) \right]\\\\
&\,&+ \frac{\l}{2}\left( \norm {v_{k+1}}^2-\norm {v_k}^2\right).
\end{array}
\end{equation*}
Using the definition of $\varphi_k$ and that $(\b_k)$ is nondecreasing, we deduce that
\begin{equation}\label{5}
\varphi_{k+1} (x_{k+1 })- \varphi_{k } (x_{k+1 })=\dfrac{1-d}{2}\left( \dfrac{1}{\b_{k+1}}-\dfrac{1}{\b_{k}} \right)\norm{x_{k+1}}^2\leq 0 ; 
\end{equation}
so, according to Lemma \ref{lemma 2 discr} $i)$, we obtain
\begin{equation*}
\begin{array}{lll}
\dot{E}_{k} &\leq& \b_k \left[ \varphi_{k} (x_{k+1 })- \varphi_{k } (x_{k })  \right]+\dot{\b} _k\left[ \varphi_{k} (x_{k+1})-\varphi_{k} (y_k)   \right] \\\\
&\;& +\dfrac{(1-d)}{2}\dfrac{\dot{\b}_k}{\b_k} \norm{y_{k+1}}^2 +\dfrac{\l}{2}\left( \norm {v_{k+1}}^2-\norm {v_k}^2\right).
\end{array}
\end{equation*}
Noting that $ \frac{\l}{2}\left( \norm {v_{k+1 }}^2-\norm {v_k}^2\right) \leq \l \langle \dot{v}_k , v_{k+1} \rangle$ then
\begin{equation}\label{6d}
\begin{array}{lll}
\dot{E}_{k} &\leq& \b_k \left[ \varphi_{k} (x_{k+1 })- \varphi_{k } (x_{k })  \right]+\dot{\b}_{k} \left[ \varphi_{k} (x_{k+1})-\varphi_{k} (y_k)   \right] \\\\
&\;& +\dfrac{(1-d)\dot{\b}_k}{2\b_k} \norm{y_{k+1}}^2 +\l \langle \dot{v}_k , v_{k+1} \rangle.
\end{array}
\end{equation}
Returning to form \eqref{disc syst } of algorithm \ref{proximal-algorithm},  we obtain
\begin{equation*}
\begin{array}{lll}
\dot{v}_{k}&=&v_{k+1}-v_{k} =  x_{k+1}-y_{k}- x_k+y_{k-1} = \dot{x}_k - \dot{y}_{k-1}.
\end{array}
\end{equation*}
Thus
\begin{equation}\label{7d}
\l \langle \dot{v}_{k} , v_{k+1}\rangle =\l  \langle\dot x_{k}, x_{k+1}-y_{k} \rangle-\l \langle\dot{y}_{k-1} , x_{k+1}-y_{k} \rangle .
\end{equation}
By strong convexity of $\varphi _k$ and $-\frac{d}{\b_k} \dot{x}_k  \in  \p \varphi_k (x_{k+1})$, we have
\begin{equation}\label{8d}
\left\langle\frac{d}{\b_k} \dot{x}_k, x_{k+1}-y_{k} \right\rangle \leq -\left(\varphi_k(x_{k+1})-\varphi _k (y_{k})\right)-\frac{1-d}{2\b_k}\norm{x_{k+1}-y_{k} }^2.
\end{equation}
Also, using Lemma \ref{lemma 2 discr} $ii)$  and inequality in  \eqref{x^*}, we have for any $b>0$
\begin{equation}\label{9d}
\begin{array}{lll}
-\l   \langle\dot{y}_{k-1} , x_{k+1}-y_{k}\rangle &\leq& \dfrac{\l b}{2}\norm{\dot{y}_{k-1}}^2+\dfrac{\l}{2b}\norm{x_{k+1}-y_{k}}^2\\\\
&\leq& \dfrac{\l b}{2}\dfrac{\dot{\b}^2_{k-1}}{\b^2_{k}}\norm{x^*}^2+\dfrac{\l}{2b}\norm{x_{k+1}-y_{k}}^2.
\end{array}
\end{equation}
Combining \eqref{7d}, \eqref{8d} and \eqref{9d}, we deduce 
\begin{equation}\label{10d}
\begin{array}{lll}
\l \langle \dot{v}_{k} , v_{k+1}\rangle &\leq&\dfrac{-\l \b_k}{d}\left(\varphi_k(x_{k+1})-\varphi _k (y_{k})\right) +\dfrac{\l b}{2}\dfrac{\dot{\b}^2_{k-1}}{\b^2_{k}}\norm{x^*}^2\\\\
& &+\dfrac{\l}{2}\left(\dfrac{1}{b}-\dfrac{1-d}{d} \right)\norm{x_{k+1}-y_{k}}^2.
\end{array}
\end{equation}
Using again the strong convexity of  $\varphi _k$, we have
\begin{equation*}
\varphi _k (x_{k})-\varphi_k(x_{k+1})\geq \left\langle\frac{d}{\b_k} \dot{x}_k, x_{k+1}-x_{k} \right\rangle+ \dfrac{1-d}{2\b_k}\norm{x_{k+1}-x_{k} }^2 =  \dfrac{1+d}{2\b_k}\norm{\dot{x}_k}^2,
\end{equation*}
which implies
\begin{equation}\label{11d}
\begin{array}{lll}
\b_k\left(\varphi_k(x_{k+1})-\varphi _k (x_{k})\right)&\leq& -\dfrac{1+d}{2}\norm{\dot{x}_k}^2 \leq 0.
\end{array}
\end{equation}
Returning to  \eqref{6d},\eqref{10d},\eqref{11d}  and using $\norm{y_{k+1}}\leq\norm{x^*}$, we obtain
\begin{equation}\label{12d}
\begin{array}{lll}
\dot{E}_{k} &\leq&  \frac{\l}{2}\left(\frac{1}{b}-\frac{1-d}{d} \right)\norm{x_{k+1}-y_{k}}^2  +\frac{1}{2}\left( \l b \frac{\dot{\b}^2_{k-1}}{\b^2_{k}}+(1-d)\frac{\dot{\b}_{k}}{\b_k} \right) \norm{x^*}^2 \\\\
&\,&+\left(\b_{k+1}-(1+\frac{\l}{d})\b _k\right)\left[ \varphi_{k} (x_{k+1})-\varphi_{k} (y_k)   \right].
\end{array}
\end{equation}
Set $\m:=\frac{\r}{1-\r}$, then $\m \in \left]0,\frac{1-d}{d}\right[$, and according to the definition of $E_{k}$, we get
\begin{equation*}
\begin{array}{lll}
\m E_{k+1} &=&\m  \b_{k+1}\left( \varphi_{k+1}(x_{k+1})-\varphi_{k+1}(y_{k+1}) \right)+\dfrac{\m \l}{2}\norm{ x_{k+1}-y_{k}}^2 \\\\
&=&\m  \b_{k+1}\left( \varphi_{k}(x_{k+1})-\varphi_{k}(y_k) \right)+\dfrac{\m \l}{2}\norm{ x_{k+1}-y_{k}}^2\\\\
& & +\m  \b_{k+1}\left( \varphi_{k+1}(x_{k+1})-\varphi_{k}(x_{k+1})+\varphi_{k}(y_k)-\varphi_{k+1}(y_{k+1}) \right)
\end{array}
\end{equation*}
Using \eqref{5} and Lemma \ref{lemma 2 discr} $i)$, we obtain
\begin{equation}
\begin{array}{lll}\label{13d}
\m E_{k+1} &\leq&\m  \b_{k+1}\left( \varphi_{k}(x_{k+1})-\varphi_{k}(y_k) \right)+\dfrac{\m (1-d)\dot{\b}_k}{2\b_k}  \norm{x^*}^2 \\\\
&\,&+\dfrac{\m \l}{2}\norm{x_{k+1}-y_{k}}^2.
\end{array}
\end{equation}
Adding  \eqref{12d} and \eqref{13d}, we conclude
\begin{eqnarray}
\dot{E}_{k}+ \m E_{k+1}&\leq&  \dfrac{\l}{2}\underbrace{\left(\dfrac{1}{b}-\dfrac{1-d}{d} +\m\right)}_{A} \norm{x_{k+1}-y_{k}}^2 		\nonumber\\
&\,& +\dfrac{1}{2}\left( \l b \dfrac{\dot{\b}^2_{k-1}}{\b^2_{k}}+(1-d)(1+\m)\dfrac{\dot{\b}_{k}}{\b_k} \right) \norm{x^*}^2 		\nonumber\\
&\,&+ \underbrace{\left((1+\m)\b_{k+1}-\left(1+\frac{\l}{d}\right)\b _k\right)}_{B}\left( \varphi_{k} (x_{k+1})-\varphi_{k} (y_k)   \right).		\label{eq33}
\end{eqnarray}
Choosing $\l$ such that for all $k$: ${\l>d\left( (1+\m)\dfrac{\b _{k+1}}{\b_k}-1 \right)}$, which  is valid since {$\left(\frac{\b _{k+1}}{\b_k}\right)$ is bounded}, we get 
$$
B=\dfrac {\b_k}{d}\left(d\left( (1+\m)\dfrac{\b _{k+1}}{\b_k}-1 \right)-\l\right)\leq 0.
$$ 
Choosing also $b=\dfrac{d}{1-(1+\m)d}$, which is positive  since $\m<\frac{1-d}{d}$,  we cancel the term $A$ in inequality \eqref{eq33}. Consequently, for all $k\geq k_0$  
\begin{equation}\label{14d}
\dot{E}_{k} +\m E_{k+1}\leq \g _k \norm{x^*}^2
\end{equation}
with
\begin{equation*}
\g_k :=\dfrac{1}{2}\left( \dfrac{\l d}{1-(1+\m )d}  \dfrac{\dot{\b}^2_{k-1}}{\b^2_{k}}+(1-d)(1+\m)\dfrac{\dot{\b}_{k}}{\b_k} \right).
\end{equation*}
Inequality \eqref{14d} is equivalent to 
$$
E_{k+1}+\left(\dfrac{\m}{1+\m }-1 \right) E_{k}\leq \left(1-\dfrac{\m}{1+\m } \right) \g_k \norm{x^*}^2. 
$$
Multiplying the last inequality by $e^{\r (k+1)}$ and using $\r = \dfrac{\m}{1+\m}$,  we obtain
\begin{equation*}
e^{\r (k+1)}E_{k+1}+(\r-1 ) e^{\r (k+1)} E_{k}\leq (1-\r)e^{\r (k+1)}\g_{k} \norm{x^*}^2.
\end{equation*}
Hence, for all $k\geq k_0$
\begin{equation*}
e^{\r (k+1)}E_{k+1}-e^{\r k}E_{k} +(e^{-\r}+\r-1)e^{\r (k+1)} E_{k}\leq (1-\r)e^{\r (k+1)}\g_{k} \norm{x^*}^2.
\end{equation*}
Remarking the function $y\mapsto e^{-y}+y-1$ is nonnegative on $\R_+$, we conclude that, for all $k\geq k_0$,
\begin{equation}\label{15d}
e^{\r (k+1)}E_{k+1}-e^{\r k}E_{k} \leq me^{\r (k+1)}\dfrac{\dot{\b}_{k}}{\b_k} \left( r\left(\dfrac{\dot{\b}_{k-1}}{\dot{\b}_k}\right)^2  \dfrac{\dot{\b}_{k}}{\b_k} + 1\right),
\end{equation}
with $r:= \dfrac{\l d }{(1-\r-d)(1-d)}$ and  $m:=\frac12(1-\r)(1-d)(1+\m)\norm{x^*}^2$.\\
Using  the assumption  \ref{H02}, we justify 
$$
\underset{k\to +\infty}{\lim}  \frac{\dot{\b}_k}{\b_k}=\underset{k\to +\infty}{\lim}\left( \frac{\b_{k+1}}{\b_k} -1\right)=\ell -1  \hbox{  and }\underset{k\to +\infty}{\lim} \frac{\dot{\b}_k}{\dot{\b}_{k+1}}  \frac{\b_{k+1}}{\b_{k}}=1 .
$$
Thus,   we have
\begin{eqnarray*}
\underset{k\to +\infty}{\lim} 
\dfrac{e^{\r (k+1)}\dfrac{\dot{\b}_{k}}{\b_k} \left( r\left(\dfrac{\dot{\b}_{k-1}}{\dot{\b}_k}\right)^2  \dfrac{\dot{\b}_{k}}{\b_k} + 1\right)}{\dfrac{\dot{\b}_{k+1}}{\b_{k+1}} e^{\r(k+2)}-\dfrac{\dot{\b}_k }{ \b_{k}}e^{\r (k+1)}} &=&
\underset{k\to +\infty}{\lim}
\dfrac{  r\left(\dfrac{\dot{\b}_{k-1}}{\dot{\b}_k}\right)^2  \dfrac{\dot{\b}_{k}}{\b_k} + 1}{\dfrac{\b_k}{\b_{k+1}}\dfrac{\dot{\b}_{k}}{\dot{\b}_{k+1}} e^{\r}-1} \\
	&=& \dfrac{  \dfrac{r(\ell -1)}{\ell^2}+1}{e^{\r}-1} < +\infty.
\end{eqnarray*}
Therefore, there exist $M>0$ and $k_1\geq k_0$ such that for $k\geq k_1$ 
\begin{equation}\label{16d}
me^{\r (k+1)} \dfrac{\dot{\b}_{k}}{\b_k} \left( r\left(\frac{\dot{\b}_{k-1}}{\dot{\b}_k}\right)^2  \dfrac{\dot{\b}_{k}}{\b_k} + 1\right) \leq  M\left(\dfrac{\dot{\b}_{k+1}}{\b_{k+1}} e^{\r(k+2)}-\dfrac{\dot{\b}_k }{ \b_{k}}e^{\r (k+1)} \right).
\end{equation}
We conclude, for all $k\geq k_1$, 
\begin{equation}\label{15d2}
e^{\r (k+1)}E_{k+1}-e^{\r k}E_{k} \leq M\left(\dfrac{\dot{\b}_{k+1}}{\b_{k+1}} e^{\r(k+2)}-\dfrac{\dot{\b}_k }{ \b_{k}}e^{\r (k+1)} \right) .
\end{equation}
After summing the inequalities \eqref{15d2} between $k_1$ and $k > k_1$, and dividing by $e^{\r (k+1)}$ we get
\begin{equation*}
\begin{array}{lll}
E_{k+1} &\leq& \dfrac{e^{\r k_1}E_{k_1}}{e^{\r(k+1)}}+\dfrac{M}{e^{\r (k+1)}}  \sum\limits_{j=k_1}^k \left( \dfrac{\dot{\b}_{j+1}}{\b_{j+1}}e^{\r (j+2)}-\dfrac{\dot{\b}_j}{\b_j}e^{\r (j+1)}\right)\\
&=& \dfrac{e^{\r k_1}E_{k_1}}{e^{\r(k+1)}}+\dfrac{M}{e^{\r (k+1)}} \left( \dfrac{\dot{\b}_{k+1}}{\b_{k+1}}e^{\r (k+2)}-\dfrac{\dot{\b}_{k_1}}{\b_{k_1}}e^{\r (k_1 +1)}\right)\\
& \leq&\dfrac{e^{\r k_1}E_{k_1}}{e^{\r(k+1)}} + Me^{\r }\dfrac{\dot{\b}_{k+1}}{\b_{k+1}}.
\end{array}
\end{equation*}
Consequently, for $k$ large enough
\begin{equation}\label{estim E_k}
E_{k}=  \mathcal{O}\left( \dfrac{\dot{\b}_{k}}{\b_{k}} + e^{-\r k}  \right).
\end{equation} 
\textbf{i)} Return  to \eqref{con- val disr} and using boundedness of the sequence $ \left( \frac{\dot{\b}_k}{\b_k} \right) _k$
we have $(E_k)$ is bounded, and thus
\begin{equation*}
f(x_k)-\underset{\mathcal{H}}{\min} \,f=\mathcal{O}\left( \dfrac{1}{\b_k} \right).
\end{equation*}
Combining \eqref{conv-tra discr} and \eqref{11d} with \eqref{estim E_k} we get  for $k\geq k_0$ 
$$
\norm{ x_k -  y_{k} }^2 \leq \frac2{1-d}E_k
$$
and
\begin{eqnarray*}
\norm{\dot{x}_k}^2 &\leq& \frac2{1+d} \b_k\left(\varphi_k(x_{k+1})-\varphi _k (x_{k})\right) \\
	&=& \frac2{1+d} \b_k\left(\varphi_k(x_{k+1})-\varphi _k (y_{k})\right)  + \frac2{1+d} \b_k\left(\varphi_k(y_{k})-\varphi _k (x_{k})\right) \\
&\leq& \frac2{1+d} \b_k\left(\varphi_k(x_{k+1})-\varphi _k (y_{k})\right)   \leq \frac2{1+d}E_k.
\end{eqnarray*}
Thus, for $k$ large enough
\begin{equation}\label{32d}
\norm{\dot x_k }^2   = \mathcal{O} \left(    \dfrac{\dot{\b}_k}{\b_k}+e^{-\r k} \right)  \hbox{ and }\norm{ x_k -  y_{k} }^2   = \mathcal{O} \left(    \dfrac{\dot{\b}_k}{\b_k}+e^{-\r k} \right) .
\end{equation}
Now, according to \eqref{disc syst }, let $z_k\in \partial f(x_{k+1})$ satisfying $x_{k+1}+\b_kz_k-dx_{k}=0$. Then
\begin{eqnarray*}
d(0,\partial f(x_{k+1}))^2 &\leq& \|z_k\|^2 = \left\|\frac{d}{\b_k}\dot x_k + \frac{1-d}{\b_k}x_{k+1}\right\|^2 \leq \frac{2d^2}{\b_k^2}\norm{\dot x_k}^2 + \frac{2(1-d)^2}{\b_k^2}\norm{ x_{k+1}}^2.
\end{eqnarray*}
Hence, \eqref{32d} and boundedness of $(x_k)$ and $(\dot x_k)$  lead to 
$$
d(0,\partial f(x_{k+1}))= \mathcal{O} \left(    \dfrac{1}{\b_k}\right), \hbox{  for $k$ large enough}.
$$
If in addition $f$ is differentiable,   we obtain the large convergence rate \eqref{eq:grad35}.\\

\textbf{ii)}   Suppose $\ell=1$ in \ref{H02}, then we have 
$$
\lim_{k\rightarrow +\infty}\dfrac{\dot{\b}_k}{\b_k}= \lim_{k\rightarrow +\infty}\dfrac{{\b}_{k+1}}{\b_k}-1=\ell-1=0,
$$
then $x_k -  y_{k} $ strongly converges to the origine of $\mathcal H$. Return to \eqref{x^*} and \eqref{32d}, we conclude that $x_k$ strongly converges to $x^*$.

Likewise, the use of  \eqref{con- val disr} and   \eqref{estim E_k}    ensures that 
\[
f(x_k)-\underset{\mathcal{H}}{\min} \,f= o \left( \dfrac{1}{\b_k} \right).
\]
\end{proof}
\begin{remark}
Let us first notice that when $\lim_{k\rightarrow +\infty}\dfrac{{\b}_{k+1}}{\b_k}=\ell\neq 1$, then $\lim_{k\rightarrow +\infty}\dfrac{\dot{\b}_{k+1}}{\dot\b_k}= \ell$.

 Indeed, 
\begin{eqnarray*}
\lim_{k\rightarrow +\infty}\dfrac{\dot{\b}_{k+1}}{\dot\b_k} &=& \lim_{k\rightarrow +\infty}\dfrac{{\b}_{k+2}-{\b}_{k+1}}{{\b}_{k+1}-\b_k} =  \lim_{k\rightarrow +\infty}\dfrac{\b_{k+1}}{\b_k}\dfrac{\frac{\b_{k+2}}{{\b}_{k+1}}-1}{\frac{\b_{k+1}}{\b_k}-1} \\
	&=& \ell\dfrac{\ell -1}{\ell -1}=\ell .
\end{eqnarray*}
This proposal is not always guaranteed when $\ell \neq 1$. For example, let $(\b_k)$ be the recurring sequence defined by $\b_0>0$ and $\b_{k+1}=(1+\ell_0^k)\b_k$ for $k\geq 0$ and $0<\ell_0<1$. Then 
\[
\lim_{k\rightarrow +\infty}\dfrac{{\b}_{k+1}}{\b_k}  = \lim_{k\rightarrow +\infty}(1+\ell_0^k)=1=\ell,
\]
nevertheless,
\[
\lim_{k\rightarrow +\infty}\dfrac{\dot{\b}_{k+1}}{\dot\b_k}  = \lim_{k\rightarrow +\infty}(1+\ell_0^k)\ell_0=\ell_0< \ell.
\]
\end{remark}

\section{Application to special cases}\label{sec6}

In this section we review two special cases for the sequence $(\b_k )$.
\subsection{\underline {Case $\b_k = k^m \ln ^q (k) $}:}
 To  investigate the fulfillment of the condition  \ref{H02}, let us first note that for $a>0$ and for $k$ large enough we have 
$$
\;\left(1+\frac{a}{k}\right)^m =1+\frac{am}{k}+o\left( \frac{1}{k} \right)\;\; \hbox{ and } \;\;\ln \left(1+\frac{a}{k}\right) =\frac{a}{k}+o\left( \frac{1}{k} \right). 
$$
Then
\begin{eqnarray*}
\left(1+\frac{a}{k}\right)^m \left(1+\frac{\ln (1+\frac{a}{k})}{\ln (k)}\right) ^q &=&\left(1+\frac{am}{k}+o\left( \frac{1}{k} \right)\right) \left(1+\frac{a}{k\ln (k)}+o\left( \frac{1}{k\ln (k)} \right)\right) ^q\\\\
&=& \left(1+\frac{am}{k}+o\left( \frac{1}{k} \right)\right) \left(1+\frac{aq}{k\ln (k)}+o\left( \frac{1}{k\ln (k)} \right)\right);
\end{eqnarray*}
which gives 
\begin{equation}\label{Tay}
\left(1+\frac{a}{k}\right)^m \left(1+\frac{\ln (1+\frac{a}{k})}{\ln (k)}\right) ^q= 1+\frac{am}{k}+\frac{aq}{k\ln (k)}+o\left( \frac{1}{k\ln (k)} \right).
\end{equation}
\begin{itemize}
\item[$\bullet$]  For \ref{H02}(i), we have $\dot{\b}_ k\neq 0\textit{ for } k>1 $, which  is trivial.\\
\item[$\bullet$]  For \ref{H02}(ii), we come back to \eqref{Tay} to deduce that when $k$ is large enough
\begin{eqnarray*}
 \dfrac{\b_{k+1}}{\b_k}
&=& \dfrac{(k+1)^m \ln ^q  (k+1)}{k^m \ln ^q  (k)}\\
&=&\left( 1+\dfrac{1}{k} \right)^m \left(1+\dfrac{\ln (1+\frac{1}{k})}{\ln (k)} \right)^q\\
&=&1+\frac{m}{k}+\frac{q}{k\ln (k)}+o\left( \frac{1}{k\ln (k)} \right).
\end{eqnarray*}
Thus 
$\lim_{k\rightarrow +\infty}\dfrac{\b_{k+1}}{\b_k}=1>0$.
\\
We also  remark that
\begin{equation*}
\begin{array}{lll}
\dfrac{\dot{\b }_{k+1}}{\dot{\b}_k} &=&\dfrac{(k+2)^m\ln^q (k+2)-(k+1)^m\ln^q (k+1)}{(k+1)^m\ln^q (k+1)-k^m\ln^q(k)}\\\\
&=& \dfrac{\left(1+\frac{2}{k}\right)^m \left(1+\frac{\ln (1+\frac{2}{k})}{\ln (k)}\right) ^q-\left(1+\frac{1}{k}\right)^m\left(1+\frac{\ln (1+\frac{1}{k})}{\ln (k)}\right) ^q}{\left(1+\frac{1}{k}\right)^m\left(1+\frac{\ln (1+\frac{1}{k})}{\ln (k)}\right) ^q-1}.
\end{array}
\end{equation*}  
Then, using \eqref{Tay}, we get for $k$ large enough
\begin{equation*}
\begin{array}{lll}
\dfrac{\dot{\b }_{k+1}}{\dot{\b}_k} &=& \dfrac{\!\left(\!1+\frac{2m}{k}+\frac{2q}{k\ln (k)}+o\left(\! \frac{1}{k\ln (k)} \!\right) \right)-\left(\! 1+\frac{m}{k}+\frac{q}{k\ln (k)}+o\left( \frac{1}{k\ln (k)} \!\right)\!\right) }{\left(\!1+\frac{m}{k}+\frac{q}{k\ln (k)}+o\!\left(\! \frac{1}{k\ln (k)} \!\right)\!\right) -1}\\
&=& \dfrac{m\ln (k) +q +o(1) }{m\ln (k) +q +o(1)}.
\end{array}
\end{equation*} 
Thus, for $(m,q)\in (\mathbb{R}^+)^2 \setminus \{(0,0)\}$, we deduce $\underset{k \to +\infty}{\lim}\dfrac{\dot{\b }_{k+1}}{\dot{\b}_k}=1. $
\end{itemize}
Consequently all conditions of \ref{H02} are satisfied. 
So, based on Theorem \ref{Th 2 disc}, the following result can be proved  easily.
\begin{proposition}\label{corollary 1 ln}
Let $f$  and $(x_k)$ be as in Theorem \ref{Th 2 disc}, and $\b_k = k^m \ln ^q  (k) $ where $(m,q)\in (\mathbb{R}^+)^2 \setminus \{(0,0)\}$. Then $(x_k)$  converges strongly to $x^*$  the element of minimum norm of $\hbox{argmin}_{\mathcal{H}}f$ and
 \begin{eqnarray*}
&& f(x_k)-\underset{\mathcal{H}}{\min}f = o \left( \frac{1}{k^m \ln^q(k)} \right);\;\;\;\;
\norm{ \dot x_k}^2   =\left \{
\begin{array}{lcl}
 \mathcal{O} \left( \frac{1}{k} \right) \;\;&\textit{ if }\; r\neq 0&  \\
 \mathcal{O} \left( \frac{1}{k\ln (k)} \right)\;\; &\textit{ if }\; r= 0;& \\
\end{array}
\right. \\ 
&&d(0,\partial f(x_{k}))= \mathcal{O} \left( \frac{1}{k^m \ln^q(k)} \right).
 \end{eqnarray*}
If moreover  $f$ is differentiable, then\; 
\begin{eqnarray*}
&&\; \norm{ \nabla f(x_k)}  =  \mathcal{O} \left( \dfrac{1}{k^{m}\ln ^{q}(k)} \right).
 \end{eqnarray*}
\end{proposition}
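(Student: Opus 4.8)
The plan is to obtain Proposition~\ref{corollary 1 ln} as a direct specialization of Theorem~\ref{Th 2 disc}: the only substantive work is to check that the choice $\b_k=k^m\ln^q(k)$ with $(m,q)\in(\mathbb{R}^+)^2\setminus\{(0,0)\}$ fulfills hypothesis \ref{H02}, after which the abstract estimates $\mathcal{O}(1/\b_k)$, $\mathcal{O}(\dot{\b}_k/\b_k+e^{-\r k})$ and $o(1/\b_k)$ need only be rewritten in the explicit powers-of-$k$-and-$\ln k$ form claimed.

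First I would dispose of \ref{H02}(i): since $m,q\geq 0$ are not both zero, $\b_k=k^m\ln^q(k)$ is strictly increasing for $k>1$, so $\dot{\b}_k=\b_{k+1}-\b_k>0$ and in particular $\dot{\b}_k\neq 0$ for $k$ large. For \ref{H02}(ii) I would use the elementary first-order expansions $(1+a/k)^m=1+am/k+o(1/k)$ and $\ln(1+a/k)=a/k+o(1/k)$ to obtain, for fixed $a>0$, relation \eqref{Tay}, namely
\[
\Bigl(1+\tfrac{a}{k}\Bigr)^{m}\Bigl(1+\tfrac{\ln(1+a/k)}{\ln k}\Bigr)^{q}=1+\frac{am}{k}+\frac{aq}{k\ln k}+o\!\left(\frac{1}{k\ln k}\right).
\]
Taking $a=1$ gives $\b_{k+1}/\b_k=1+\tfrac{m}{k}+\tfrac{q}{k\ln k}+o(1/(k\ln k))\to 1$, hence $\ell=1>0$. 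For the ratio $\dot{\b}_{k+1}/\dot{\b}_k$, which is the quotient of $\b_{k+2}-\b_{k+1}$ by $\b_{k+1}-\b_k$, I would apply \eqref{Tay} with $a=2$ and $a=1$ to the numerator and denominator respectively; the constant terms cancel, and both numerator and denominator equal $\tfrac{m\ln k+q}{k\ln k}+o(1/(k\ln k))$, so $\dot{\b}_{k+1}/\dot{\b}_k\to 1=\ell$. Thus \ref{H02} holds with $\ell=1$.

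Once \ref{H02} is verified, part~(ii) of Theorem~\ref{Th 2 disc} applies verbatim and yields the strong convergence of $(x_k)$ to the minimum-norm element $x^*$ together with $f(x_k)-\min_{\mathcal H}f=o(1/\b_k)=o(1/(k^m\ln^q k))$. The remaining rates come from part~(i): $\|\dot x_k\|^2=\mathcal{O}(\dot{\b}_k/\b_k+e^{-\r k})$ and $d(0,\partial f(x_{k+1}))=\mathcal{O}(1/\b_k)$. From the expansion above, $\dot{\b}_k/\b_k=\b_{k+1}/\b_k-1=\tfrac{m}{k}+\tfrac{q}{k\ln k}+o(1/(k\ln k))$, which behaves like $1/k$ when $m\neq 0$ and like $1/(k\ln k)$ when $m=0$ (forcing $q>0$); since $e^{-\r k}$ is exponentially small it is absorbed in either case, giving $\|\dot x_k\|^2=\mathcal{O}(1/k)$ if $m\neq 0$ and $\mathcal{O}(1/(k\ln k))$ if $m=0$ (so the symbol $r$ in the statement is to be read $m$). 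Finally $1/\b_k=1/(k^m\ln^q k)$, and since $\b_{k}/\b_{k-1}\to 1$ the index shift between $x_{k+1}$ and $x_k$ is harmless, so $d(0,\partial f(x_k))=\mathcal{O}(1/(k^m\ln^q k))$; if $f$ is moreover differentiable, \eqref{eq:grad35} upgrades this to $\|\nabla f(x_k)\|=\mathcal{O}(1/(k^m\ln^q k))$.

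There is no genuine difficulty here --- it is a verification-and-substitution argument --- but the one point demanding care is the second-order expansion of $\dot{\b}_{k+1}/\dot{\b}_k$: because it is a $0/0$ limit, the expansions must be kept to the $1/(k\ln k)$ term (not merely $1/k$) in order to see that numerator and denominator share the leading behaviour $\tfrac{m\ln k+q}{k\ln k}$, and hence that the ratio tends to $1$; truncating one order too early would lose this. A second, minor, point is to keep the cases $m\neq 0$ and $m=0$ separate when reading off the rate of $\|\dot x_k\|^2$, since the dominant term of $\dot{\b}_k/\b_k$ switches from $m/k$ to $q/(k\ln k)$.
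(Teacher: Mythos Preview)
Your proposal is correct and follows essentially the same approach as the paper: verify hypothesis \ref{H02} via the first-order expansion \eqref{Tay} (with $a=1$ for $\b_{k+1}/\b_k$ and $a=1,2$ for $\dot{\b}_{k+1}/\dot{\b}_k$), conclude $\ell=1$, and then invoke Theorem~\ref{Th 2 disc}. Your remarks on keeping the expansion to the $1/(k\ln k)$ term and on reading $r$ as $m$ in the statement are apt and match the paper's computation.
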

\subsection{\underline {Case $ \beta_k = k^me^{\gamma k^r}$}:\;}
  Let us  now  treat   the case $ \beta_k = k^me^{\gamma k^r}$ with $r\in ]0,1], m\geq 0$ and $\gamma >0$.  To  fulfill   the condition  \ref{H02}, we first remark that for $0<r<1$ and $a>0$ we have  for $k$ large enough
\begin{equation*}\label{Tay 2}
(k+a)^r -k^r = k^r \left[ \left( 1+\dfrac{a}{k} \right)^r -1  \right] =ark^{r-1}+o(k^{r-1}).
\end{equation*} 
Hence,   for $k$ large enough
\begin{equation}\label{Tay 3}
e^{\g((k+a)^r-k^r)}=e^{a\g r k^{r-1}+o(k^{r-1})}=1+a\g rk^{r-1}+o(k^{r-1}).
\end{equation}
\begin{itemize}
\item[$\bullet\,$] Likewise \ref{H02}(i), $\dot{\b}_ k\neq 0\textit{ for } k>1 $ is trivial.
\item[$\bullet\,$] For \ref{H02}(ii), we distinguish two cases:
\\
$\star$ If $0<r<1$  then using \eqref{Tay 3} with $a=1$, we get for $k$ large enough 
 \begin{equation*}
 \dfrac{\b_{k+1}}{\b_k}= \left(1+\frac1{k}\right)^me^{\gamma \left((k+1)^r-k^r\right)}= 1+\g rk^{r-1}+o(k^{r-1}),
\end{equation*}  
\begin{equation*}
\begin{array}{lll}
\dfrac{\dot{\b }_{k+1}}{\dot{\b}_k}&=& \dfrac{(k+2)^me^{\g(k+2)^r}-(k+1)^me^{\g(k+1)^r}}{(k+1)^me^{\g(k+1)^r}-k^me^{\g k^r}}\\\\
&=&\dfrac{1+\frac{m}{\g rk^r}+o\left(\frac{m}{k^{-r}}\right)}{1+\frac{m}{\g rk^r}+o\left(\frac{m}{k^{-r}}\right)}.
\end{array}
\end{equation*}
Thus 
$$ 
\underset{k \to +\infty}{\lim}\dfrac{{\b }_{k+1}}{{\b}_k}=\underset{k \to +\infty}{\lim}\dfrac{\dot{\b }_{k+1}}{\dot{\b}_k}=1>0. 
$$
$\star$  If $r=1$, we have for $k$ large enough the termes  $ \dfrac{\b_{k+1}}{\b_k}$  and $\dfrac{\dot{\b }_{k+1}}{\dot{\b}_k}$ are equivalent to $e^{\g}\left( 1+\frac{m}{k}+o\left(\frac{1}{k}\right)\right)$, so 
$$
\underset{k \to +\infty}{\lim}\dfrac{{\b }_{k+1}}{{\b}_k} =\underset{k \to +\infty}{\lim}\dfrac{\dot{\b }_{k+1}}{\dot{\b}_k}=e^{\g}>1. 
$$
\end{itemize} 
Applying again  Theorem \ref{Th 2 disc}, we obtain 
\begin{proposition}\label{corollary 2 disc}
Let $f$  and $(x_k)$ be as in Theorem \ref{Th 2 disc}, where $\b_k =   k^me^{\gamma k^r}.$ 
Then
\\
$\star$  if $r=1$ and $m=0$, we have  $(x_k)$ is bounded and   for $k$ large enough
\begin{eqnarray*}
&&f(x_k)-\underset{\mathcal{H}}{\min}f = \mathcal{O} \left( e^{-\gamma k} \right);
\end{eqnarray*}
$\star$ if either $0<r<1$ or $m>0$,  $(x_k)$  converges strongly to $x^*$  the element of minimum norm of $\hbox{argmin}_{\mathcal{H}}f$, and for $k$ large enough, we have   
\begin{eqnarray*}
&& f(x_k)-\underset{\mathcal{H}}{\min}f = o \left( k^{-m}e^{-\gamma k^r} \right), \;
\norm{\dot x_k}^2   = \mathcal{O} \left( \dfrac{1}{k^{1-r}} \right)\;\\
&&d(0,\partial f(x_{k}))=  \mathcal{O} \left( k^{-m}e^{-\gamma k^r} \right).
 \end{eqnarray*}
If moreover  $f$ is differentiable, then\; 
\begin{eqnarray*}
\hbox{and }\; \norm{\n f(x_{k})}  =    \mathcal{O} \left( k^{-m}e^{-\gamma k^r} \right).
\end{eqnarray*}
\end{proposition}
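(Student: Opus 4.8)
The plan is to obtain Proposition~\ref{corollary 2 disc} by specializing Theorem~\ref{Th 2 disc} to $\beta_k=k^m e^{\gamma k^r}$. Three steps: (a) verify the hypotheses $(\mathbf{H}_{\beta_k})$ and compute $\ell=\lim_k \beta_{k+1}/\beta_k$; (b) substitute the abstract rates $\mathcal{O}(1/\beta_k)$ and $\mathcal{O}(\dot\beta_k/\beta_k+e^{-\rho k})$ of that theorem and simplify them in terms of $k,m,\gamma,r$; (c) separate the regimes according to whether $\ell=1$.

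For step (a), condition (i) of $(\mathbf{H}_{\beta_k})$ is immediate: $(\beta_k)$ is eventually strictly increasing with $\beta_k\to+\infty$, so $\dot\beta_k=\beta_{k+1}-\beta_k\neq0$ for $k$ large. For condition (ii), the expansion \eqref{Tay 3} gives $\beta_{k+1}/\beta_k=(1+\tfrac1k)^m e^{\gamma((k+1)^r-k^r)}$, which equals $1+\gamma r k^{r-1}+o(k^{r-1})$ when $0<r<1$ and is equivalent to $e^\gamma(1+\tfrac mk+o(\tfrac1k))$ when $r=1$; the same limit holds for $\dot\beta_{k+1}/\dot\beta_k$. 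Hence $\ell=1$ if $0<r<1$ and $\ell=e^\gamma>0$ if $r=1$, and $(\mathbf{H}_{\beta_k})$ is satisfied in every case.

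For steps (b)--(c), Theorem~\ref{Th 2 disc}(i) directly yields $f(x_k)-\min_{\mathcal H}f=\mathcal{O}(1/\beta_k)=\mathcal{O}(k^{-m}e^{-\gamma k^r})$, $d(0,\partial f(x_{k+1}))=\mathcal{O}(1/\beta_k)$ (hence $\|\nabla f(x_k)\|=\mathcal{O}(1/\beta_k)$ when $f$ is differentiable), and $\|\dot x_k\|^2,\ \|x_k-y_k\|^2=\mathcal{O}(\dot\beta_k/\beta_k+e^{-\rho k})$; since $\dot\beta_k/\beta_k=\beta_{k+1}/\beta_k-1$ is of order $k^{-(1-r)}$ when $0<r<1$ and this dominates $e^{-\rho k}$, the latter bound becomes $\mathcal{O}(k^{-(1-r)})$. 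For the improved $o(1/\beta_k)$ rate and the strong convergence, one invokes Theorem~\ref{Th 2 disc}(ii), which applies exactly when $\ell=1$, i.e.\ when $0<r<1$: then $x_k\to x^*$ strongly and $f(x_k)-\min f=o(k^{-m}e^{-\gamma k^r})$. When $r=1$ and $m=0$ one keeps only part (i): $(E_k)$ is bounded (here $\dot\beta_k/\beta_k\equiv e^\gamma-1$ is constant), so by $\|x_k-y_k\|^2\le\frac2{1-d}E_k$ and $\|y_k\|\le\|x^*\|$ the sequence $(x_k)$ is bounded and $f(x_k)-\min f=\mathcal{O}(e^{-\gamma k})$.

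The only point that is not a mechanical specialization is the sub-case $r=1$, $m>0$, where the proposition still asserts strong convergence to $x^*$: there $\beta_{k+1}/\beta_k\to e^\gamma\neq1$, so $\dot\beta_k/\beta_k$ does not tend to $0$ and Theorem~\ref{Th 2 disc}(ii) is unavailable; the estimate $\|x_k-y_k\|^2=\mathcal{O}(\dot\beta_k/\beta_k+e^{-\rho k})$ then gives only boundedness of $(x_k)$. I expect this to be the main obstacle. Resolving it would mean either restricting the strong-convergence statement to $0<r<1$, or supplying a separate argument — for instance re-running the Lyapunov inequality \eqref{14d} with $\beta_k=k^m e^{\gamma k}$ and exploiting the polynomial correction $\tfrac mk e^\gamma$ in $\dot\beta_k/\beta_k$, or recasting $k^{-m}e^{-\gamma k}$ as a reparametrization of $e^{-\gamma k}$. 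All remaining assertions follow from Theorem~\ref{Th 2 disc} together with the Taylor expansions already displayed before the statement.
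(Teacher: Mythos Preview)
Your approach coincides with the paper's: verify $(\mathbf{H}_{\beta_k})$ via the Taylor expansions displayed just before the proposition, then invoke Theorem~\ref{Th 2 disc}. The paper's proof is literally the sentence ``Applying again Theorem \ref{Th 2 disc}, we obtain'' following those expansions, so everything you do in steps (a)--(c) matches it.

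The gap you flag in the sub-case $r=1$, $m>0$ is genuine and is \emph{also} a gap in the paper. The paper's pre-proposition computation shows that when $r=1$ one has $\beta_{k+1}/\beta_k\to e^{\gamma}$ and $\dot\beta_{k+1}/\dot\beta_k\to e^{\gamma}$ \emph{regardless of $m$}; hence $\ell=e^{\gamma}\neq 1$, $\dot\beta_k/\beta_k\to e^{\gamma}-1\neq 0$, and Theorem~\ref{Th 2 disc}(ii) is unavailable. The paper offers no separate argument for this sub-case, so the strong-convergence claim (and the $o$-rate, and the bound $\|\dot x_k\|^2=\mathcal{O}(k^{-(1-r)})$, which becomes vacuous at $r=1$) are not justified by the route the paper indicates. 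Your diagnosis is correct; the natural fix is the first one you suggest, namely restricting the second bullet of the proposition to $0<r<1$.
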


\section{Numerical example}\label{sec7}

In this section, we present a numerical example in the framework of nondifferentiable minimization problem to illustrate the performance of our iterative method \ref{proximal-algorithm}. So, we consider the proper lower semicontinuous and convex function 
$$
f(x,y)= |x| + \iota_{[-a,a]}(x) + \frac12(y-v_0)^2
$$
where $a>0, v_0\in \mathbb R$ and $\iota_{[-a,a]}$ is the indicator function having the value $0$ on $[-a,a]$ and $+\infty$ elsewhere.   The well-known Fermat's and subdifferential sum rules for convex functions ensure that
$
(\bar x,\bar y) \in\hbox{argmin}_{\mathbb R^2}f ,
$
iff 
$$
(0,0)\in \partial f(\bar x,\bar y) = \partial\left( |\cdot| + \delta_{[-a,a]}\right)(\bar x)\times \partial\left( \frac12(\cdot-v_0)^2\right)(\bar y)  \iff (\bar x,\bar y)=(0,v_0).
$$
Thus $\hbox{argmin}_{\mathbb R^2}f = \{0,v_0)\}$.
Using the rules for calculating proximal operators, see \cite[Chap 6]{beck}, we have  for each $\lambda >0, (x,y)\in \mathbb R^2$,
\begin{eqnarray*}
\hbox{prox}_{\lambda f}(x,y) &=& \left( \hbox{prox}_{\lambda \left( |\cdot| + \delta_{[-a,a]}\right)}(x)\, , \, \hbox{prox}_{\lambda \left( \frac12(\cdot-v_0)^2\right)}(y)\right)\\
	&=&  \left(\min[\max(|x|-\l,0),a]\hbox{sign}(x)\; , \; -\frac1{\l + 1}(y-\l v_0)\right)
\end{eqnarray*}
Below, we explain our algorithm \ref{proximal-algorithm} and the one proposed by L\'aszl\'o in the recent paper \cite{Las23}:
\begin{eqnarray}
\label{eq:BCR}&&\left\{
\begin{array}{rll}
(x_0,y_0)&\in& \mathbb R^2,\\
(x_{k+1},y_{k+1})&=&\hbox{prox}_{\b_k f}(d x_k,dy_k)
\end{array}\right.
\\
\label{eq:L}&&\left\{
\begin{array}{rll}
(x_0,y_0)&,& (x_1,y_1)\in\mathbb R^2,\\
(u_k,v_k)&=& (x_k,y_k)+(1-\frac{\a}{k^q})(x_k-x_{k-1},y_k-y_{k-1}),\\
(x_{k+1},y_{k+1})&=&\hbox{prox}_{\l_k f}\left((u_k,v_k)-\frac{c}{k^p}(x_k,y_k)\right)
\end{array}\right.
\end{eqnarray}
We note that \eqref{eq:L} is an implicit discretization of the differentiable dynamical system studied in \cite{Las23A}, that is $\ddot x(t)+ \frac{\alpha}{t^q} \dot x(t)+\nabla f(x(t))+ \frac{c}{t^p} x(t)=0$ where $\alpha,c, q,p>0$. 
In \cite[Theorem 1.1]{Las23}, for $\l_k=\l k^\delta$, the conditions on the parameters $p,q,\alpha, \delta, \l$ and $c$ impose that $0< q<1, p\leq 2, \l>0, \d\in\mathbb R, c>0$ where the choice of $\d,\l$ and $c$ depend on the positioning of $p$ with respect to $q +1$.
\begin{figure} 
 \includegraphics[scale=0.35]{DiscrOrder1-2_GeneralConvB.pdf}
  \caption{Comparison of convergence  rates of values, trajectories and gradients between teh proposed algorithm  \eqref{eq:BCR} and that of L\'aszl\'o \eqref{eq:L}, see \cite{Las23}.}
 \label{fig:trigs-d2} 
\end{figure}
The squared distance of $(x_k,y_k)$, generated by the above two iterations, to the  solution $(x^*,y^*)$ and the decay of the objective function $f(x_k,y_k)-\min_{\mathbb R^2}f(x,y)$ along the iterations are shown in Figure \ref{fig:trigs-d}  for the selected values $\b_k$ equal respectively to $k^3\ln^3k$ and $k^3e^{2 k^{4/5}}$ for the algorithm \eqref{eq:BCR}, and those equal to $p=2, q=\frac45, \alpha=2, \delta=2, \l=5$ for the algorithm \eqref{eq:L}. 
We note that the choice of $\beta_k=k^3e^{2 k^{4/5}}$ justifies the originality of  Theorem \ref{Th 2 disc}, since  we end up with an exponential convergence rate for the values and the gradient. Also,  the benefit of the inverse-relaxation $\beta_k=k^3e^{2 k^{4/5}}$ (red curve  in Figure \ref{fig:trigs-d2}), as allowed in Proposition \ref{corollary 2 disc}, is clearly visible.
\\
\begin{figure} 
 \includegraphics[scale=0.35]{cont-ordre1_Example1_VarisParam}
  \caption{Results of the convergence rates of values, trajectories and gradients when the parameters $m, r, \g, d$ in the algorithm \ref{proximal-algorithm} vary. Note that all these parameters are active in the convergence evolution except that of $d$.}
 \label{fig:trigs-param} 
\end{figure}
Figure \ref{fig:trigs-param} explains the interest in the growth of the three parameters $m,r,\gamma$ in $\beta_k=k^me^{\gamma k^r}$, while the parameter $d$ in Algorithm \ref{proximal-algorithm} does not act on the convergence rates.
}\fi


\section{Discussion and Conclusion}\label{sec13}

In this paper we first study a first order dynamical system 
\[
\dot{x}(t)  + \beta (t)\n\left(f + \dfrac{c}{2\beta (t)}\norm{\cdot} ^2\right)(x(t))= 0
\]
which can be viewed as a Cauchy system with a time rescaling parameter on the gradient  of the objective function and a Tikhonov regularization term. This dynamical system  makes it possible to maintain the rate for convergence of values for generated trajectory for systems without Tikhonov regularization. We obtain moreover a similar rate for convergence of the norm-square of the associated gradients towards the origin.
Another original new assertion of our system is the strong convergence of the  generated trajectory towards the minimum norm solution of the objective function.


It would be a novelty in the literature to achieve a good speed of convergence for values and gradients with a time first order system, because recent papers \cite{BCR1,BCR2} have achieved the same speeds of convergence with the time second order differential systems:
\[
\ddot{x}(t) +  \alpha \, \dot{x}(t) + \beta (t) \nabla f(x(t)) + c x(t)    =0,
\] 
\[
 \ddot{x}(t) +  \alpha \, \dot{x}(t) + \d \nabla ^2 f(x(t))\dot{x}(t) + \beta (t) \nabla f(x(t)) + c x(t)    =0.
 \] 

As desired, future 
 theoretical and numerical study  may be the subject of further work
  which focus on extending the study of the performance of systems  associated with other penalization functions than $\frac12\|\cdot\|^2$. More precisely, consider 
\[
\dot{x}(t)  + \beta (t)\n\left(f + \dfrac{1}{\beta (t)}g\right)(x(t))= 0
\]
 in order to solve with the same performance the two level hierarchical minimization problem: $\min g(x) $ under constraints the solution-set of the convex function $f$.

\if{
\section{Declarations}\label{sec14}
Some journals require declarations to be submitted in a standardised format. Please check the Instructions for Authors of the journal to which you are submitting to see if you need to complete this section. If yes, your manuscript must contain the following sections under the heading ?Declarations?:

\paragraph{Funding.} 
This research project is supported by the authors themselves

\paragraph{Conflict of interest.}
 Authors declared no competing interest. 
\paragraph{Ethics approval.} 
Not applicable
\paragraph{Consent to participate.} 
Not applicable
\paragraph{Consent for publication.} 
Not applicable
\paragraph{Availability of data and materials.} 
The data sets used are from the authors and can be provided based on request
}\fi


%
\medskip

\begin{appendices}
\section{}\label{sec-appendix}
\if{
We rely on the basic properties of the Moreau envelope $f_{\g}: \mathcal{H} \longrightarrow \mathbb{R}  \;\; (\g >0)$, which is defined by
\begin{equation}\label{1s}
f_\g (x) = {\min}_{y \in \mathcal{H}} \left( f(y)+\frac{1}{2\g }\norm{x-y}^2 \right) \;\; \hbox{for any } x\in \mathcal{H}.
\end{equation}
Recall that,  the functions $f$ and $f_\l$ share the same optimal objective value and the same
set of minimizers
$$ {\min}_{y \in \mathcal{H}}\, f = {\min}_{y \in \mathcal{H}}\, f_\g \;\;\;\;\;\; \hbox{and}\;\;\;\;\;\; {\argmin}_{\mathcal{H}}\, f ={\argmin}_{\mathcal{H}}\, f_\g . $$
In addition, $f_\l$ is convex and continuously differentiable whose gradient is $\frac{1}{\g}$-Lipschitz continuous.
\begin{lemma}[{\rm\cite{Att2}, \cite[Section 12]{BaCo}, \cite[Prop 2.6]{Brezis}}]\label{lem-basic-c}
The unique point where the minimum value is achieved in \eqref{1s} is denoted by ${\prox} _{\g f} (x) $, and satisfies the following classical formulas: For each $\g >0$ and $ x\in \mathcal{H}$, 
\begin{enumerate}
\item $ f_\g (x)=f({\prox} _{\g f} (x)) +\frac{1}{2\g}\norm{x-{\prox} _{\g f} (x)}^2  $;
\item  $\nabla f_\g (x) = \frac{1}{\g} (x-{\prox} _{\g f} (x)) $;
\item  ${\prox} _{\theta f_\g}(x)=\frac{\g}{ \g +\theta} x + \frac{\theta}{ \g +\theta} {\prox} _{(\g + \theta)f}(x), \;\hbox{ for all } \theta > 0$;
\item  $  \|{\prox} _{\g f} (0)\|\leq \|x^{*}\|$, where  $x^{*}=\mbox{\rm proj}_{\argmin f} 0$; \label{2a}  \medskip
\item   $\lim_{\g \rightarrow +\infty}\|{\prox} _{\g f} (0)-x^{*}\|=0 $. \label{2b}
\end{enumerate}
 \end{lemma}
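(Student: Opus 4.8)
The plan is to treat the five assertions in increasing order of difficulty, using the variational characterization of the proximal point together with monotonicity of $\partial f$. Items (1) and (2) are essentially the definition and the standard differentiability of the Moreau envelope. For (1), since $\prox_{\g f}(x)$ is, by strong convexity of $y\mapsto f(y)+\frac1{2\g}\norm{x-y}^2$, the unique minimizer realizing the infimum in \eqref{1s}, evaluating the objective at that point gives the stated decomposition of $f_\g(x)$ at once. For (2), the first-order optimality condition for the inner problem reads $\frac1\g(x-p)\in\partial f(p)$ with $p:=\prox_{\g f}(x)$; differentiability of $f_\g$ with $\nabla f_\g(x)=\frac1\g(x-\prox_{\g f}(x))$ then follows from the firm nonexpansiveness (hence Lipschitz continuity) of $x\mapsto\prox_{\g f}(x)$ and the envelope theorem, which is exactly the content of the cited references.

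For (3) I would pass to resolvent language, identifying $\prox_{\g f}$ with $J_{\g\partial f}=(I+\g\partial f)^{-1}$ and $\nabla f_\g$ with the Yosida approximation $(\partial f)_\g=\frac1\g(I-J_{\g\partial f})$. Setting $w:=\prox_{(\g+\t)f}(x)$, so that $\frac1{\g+\t}(x-w)\in\partial f(w)$, and $u:=\frac{\g}{\g+\t}x+\frac{\t}{\g+\t}w$, a direct substitution shows $\frac1\g(u-w)=\frac1{\g+\t}(x-w)\in\partial f(w)$, which forces $\prox_{\g f}(u)=w$; feeding this back gives $u+\t\,\nabla f_\g(u)=x$, i.e. $u=\prox_{\t f_\g}(x)$, which is the claimed identity. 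This is a brief computation once the resolvent notation is in place.

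For (4), write $y_\g:=\prox_{\g f}(0)$; optimality gives $-\frac1\g y_\g\in\partial f(y_\g)$, while $0\in\partial f(x^*)$. Monotonicity of $\partial f$ on the pairs $(y_\g,-\frac1\g y_\g)$ and $(x^*,0)$ yields $\norm{y_\g}^2\le\dotp{y_\g}{x^*}\le\norm{y_\g}\,\norm{x^*}$, hence $\norm{y_\g}\le\norm{x^*}$. For (5), item (4) confines $(y_\g)$ to the ball of radius $\norm{x^*}$, so it is weakly relatively compact, while the minimizing inequality $f(y_\g)+\frac1{2\g}\norm{y_\g}^2\le f(x^*)+\frac1{2\g}\norm{x^*}^2$ gives $\limsup_{\g\to+\infty}f(y_\g)\le\min f$. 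By weak lower semicontinuity of $f$ every weak cluster point $\bar y$ lies in $\argmin f$, and weak lower semicontinuity of the norm with $\norm{y_\g}\le\norm{x^*}$ forces $\norm{\bar y}\le\norm{x^*}$; since $x^*$ is the projection of the origin onto $\argmin f$, i.e. its unique minimum-norm element, this gives $\bar y=x^*$ and $\norm{y_\g}\to\norm{x^*}$ along the subsequence. In a Hilbert space, weak convergence together with convergence of norms upgrades to strong convergence, and since the only possible cluster point is $x^*$ the whole family converges strongly. I expect this weak-to-strong upgrade, together with the identification of the limit as the minimum-norm solution, to be the sole delicate point; everything else is formal.
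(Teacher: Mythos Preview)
Your proof is correct and complete. Note, however, that the paper does not actually prove this lemma: it is stated as a collection of classical facts with citations to Attouch, Bauschke--Combettes, and Br\'ezis, and no argument is given. Your arguments are precisely the standard ones one finds in those references---the variational characterization for (1), the resolvent/Yosida identification for (2) and (3), monotonicity of $\partial f$ for (4), and the weak-compactness plus Kadec--Klee upgrade for (5)---so there is nothing to compare.
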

}\fi

The following Lemma provides an extended  version of the classical gradient lemma which is valid for differentiable convex functions. The following version has been obtained in \cite[Lemma 1]{ACFR}, \cite{ACFR-Optimisation}.
We reproduce its proof for the convenience of the reader.
\begin{lemma}\label{ext_descent_lemma}
Let  $f: \cH \to \R$ be  a  convex function whose gradient is $L$-Lipschitz continuous. Let $s \in ]0,1/L]$. Then for all $(x,y) \in \cH^2$, we have
\begin{equation}\label{eq:extdesclem}
f(y - s \nabla f (y)) \leq f (x) + \left\langle  \nabla f (y), y-x \right\rangle -\frac{s}{2} \|  \nabla f (y) \|^2 -\frac{s}{2} \| \nabla f (x)- \nabla f (y) \|^2 .
\end{equation}
In particular, when $\argmin f \neq \emptyset$, we obtain that for any $x\in \cH$
\begin{equation}\label{eq:extdesclemb}
f(x)-\min_{\cH} f  \geq \frac{1}{2L} \| \nabla f (x)\|^2 .
\end{equation}
\end{lemma}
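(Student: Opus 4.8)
The plan is to derive the two-point inequality \eqref{eq:extdesclem} by combining two elementary one-step estimates, then to read off \eqref{eq:extdesclemb} by setting $x=y$; throughout I would use only convexity of $f$ together with the standard descent lemma, which for an $L$-Lipschitz gradient states $f(u)\le f(v)+\langle\nabla f(v),u-v\rangle+\frac{L}{2}\|u-v\|^2$ for all $u,v\in\cH$. \textbf{Step 1 (descent at $y$).} Applying the descent lemma with $u=y-s\nabla f(y)$ and $v=y$ gives $f(y-s\nabla f(y))\le f(y)-s(1-\frac{Ls}{2})\|\nabla f(y)\|^2$; since $0<s\le 1/L$ one has $1-\frac{Ls}{2}\ge\frac12$, hence $f(y-s\nabla f(y))\le f(y)-\frac{s}{2}\|\nabla f(y)\|^2$.

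\textbf{Step 2 (the refined convexity term).} Fix $x\in\cH$ and set $h(z):=f(z)-f(x)-\langle\nabla f(x),z-x\rangle$. Then $h$ is convex, $h\ge 0$ by convexity of $f$, its gradient $\nabla h=\nabla f-\nabla f(x)$ is $L$-Lipschitz, and $\nabla h(x)=0$, so $x$ minimizes $h$ with $\min h=0$. Applying the descent lemma to $h$ at an arbitrary $z$ with step $1/L$ yields $0=\min h\le h(z-\frac1L\nabla h(z))\le h(z)-\frac{1}{2L}\|\nabla h(z)\|^2$, i.e. $h(z)\ge\frac{1}{2L}\|\nabla f(z)-\nabla f(x)\|^2\ge\frac{s}{2}\|\nabla f(z)-\nabla f(x)\|^2$, which is $f(z)\ge f(x)+\langle\nabla f(x),z-x\rangle+\frac{s}{2}\|\nabla f(z)-\nabla f(x)\|^2$. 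Relabelling $(x,z)\mapsto(y,x)$ and rearranging gives $f(y)\le f(x)+\langle\nabla f(y),y-x\rangle-\frac{s}{2}\|\nabla f(x)-\nabla f(y)\|^2$.

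\textbf{Step 3 (conclusion).} Adding the estimates of Steps 1 and 2 produces exactly \eqref{eq:extdesclem}. For \eqref{eq:extdesclemb} I would set $x=y$ there: the inner-product term and the term $-\frac{s}{2}\|\nabla f(x)-\nabla f(y)\|^2$ both vanish, leaving $f(x-s\nabla f(x))\le f(x)-\frac{s}{2}\|\nabla f(x)\|^2$; taking $s=1/L$ and using $f(x-\frac1L\nabla f(x))\ge\min_{\cH}f$, which is finite because $\argmin f\neq\emptyset$, gives $f(x)-\min_{\cH}f\ge\frac{1}{2L}\|\nabla f(x)\|^2$. I do not expect any genuine obstacle: the one subtle point is that the sharper subtracted term $\frac{s}{2}\|\nabla f(x)-\nabla f(y)\|^2$ is produced not by plain convexity but by applying the descent lemma to the auxiliary function $h$, combined with the elementary bookkeeping forced by $0<s\le 1/L$ (turning $1-\frac{Ls}{2}$ into a factor $\ge\frac12$ and $\frac{1}{2L}$ into a quantity $\ge\frac{s}{2}$).
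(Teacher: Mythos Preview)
Your proof is correct. Step~1 and the final combination match the paper exactly; the difference lies in how the refined upper bound
\[
f(y)\le f(x)+\langle\nabla f(y),y-x\rangle-\tfrac{s}{2}\|\nabla f(x)-\nabla f(y)\|^2
\]
is obtained in Step~2. The paper argues by Fenchel duality: it writes $f(y)=\langle\nabla f(y),y\rangle-f^*(\nabla f(y))$ and exploits that $L$-Lipschitz continuity of $\nabla f$ is equivalent to $1/L$-strong convexity of $f^*$, together with $(\nabla f)^{-1}=\partial f^*$. You instead introduce the Bregman-type remainder $h(z)=f(z)-f(x)-\langle\nabla f(x),z-x\rangle$ and apply the descent lemma a second time to~$h$, which yields the same cocoercivity-type bound without invoking conjugates. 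Your route is a bit more self-contained (it stays entirely in the primal and reuses only the descent lemma), while the paper's route makes explicit the underlying duality between smoothness and strong convexity; both are standard and lead to the identical inequality. Your derivation of \eqref{eq:extdesclemb} by specializing to $x=y$ and $s=1/L$ is also fine; the paper leaves that step implicit.
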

\begin{proof}
Denote $y^+=y - s \nabla f (y)$. By the standard descent lemma applied to $y^+$ and $y$, and since $sL \leq 1$ we have
\begin{equation}\label{eq:descfm2}
f(y^+) \leq f(y) - \frac{s}{2}\left(2-Ls\right) \| \nabla f (y) \|^2 \leq f(y) - \frac{s}{2} \|  \nabla f (y) \|^2.
\end{equation}
We now argue by duality between strong convexity and Lipschitz continuity of the gradient of a convex function. Indeed, using Fenchel identity, we have
\[
f(y) = \dotp{\nabla f(y)}{y} - f^*(\nabla f(y)) .
\]
$L$-Lipschitz continuity of the gradient of $f$ is equivalent to $1/L$-strong convexity of its conjugate $f^*$. This together with the fact that $(\nabla f)^{-1}=\partial f^*$ gives for all $(x,y) \in \cH^2$,
\[
f^*(\nabla f(y)) \geq  f^*(\nabla f(x)) + \dotp{x}{\nabla f(y)-\nabla f(x)} + \frac{1}{2L}\norm{\nabla f(x)-\nabla f(y)}^2 .
\]
Inserting this inequality into the Fenchel identity above yields
\begin{align*}
f(y) 
&\leq - f^*(\nabla f(x)) + \dotp{\nabla f(y)}{y} - \dotp{x}{\nabla f(y)-\nabla f(x)} - \frac{1}{2L}\norm{\nabla f(x)-\nabla f(y)}^2 \\
&= - f^*(\nabla f(x)) + \dotp{x}{\nabla f(x)} + \dotp{\nabla f(y)}{y-x} - \frac{1}{2L}\norm{\nabla f(x)-\nabla f(y)}^2 \\
&= f(x) + \dotp{\nabla f(y)}{y-x} - \frac{1}{2L}\norm{\nabla f(x)-\nabla f(y)}^2  \\
&\leq f(x) + \dotp{\nabla f(y)}{y-x} - \frac{s}{2}\norm{\nabla f(x)-\nabla f(y)}^2 .
\end{align*}
Inserting the last bound into \eqref{eq:descfm2} completes the proof.
\end{proof}

\end{appendices}

\end{document}